\makeatletter \theoremstyle{plain}
\newcommand{\R}{\mathbb R}
\newcommand{\N}{\mathbb N}
\newcommand{\Aa}{\mathcal{AA}}
\newcommand{\C}{\mathbb C}
\newcommand{\bH}{\mathbf H}
\newcommand{\Mod}{\mathrm{Mod}}
\newtheorem{thm}{Theorem}[section]
\newtheorem{cor}[thm]{Corollary}
\newtheorem{prop}[thm]{Proposition}
\newtheorem{lem}[thm]{Lemma}
\theoremstyle{definition}
\newtheorem{defn}[thm]{Definition}
\newtheorem{rem}[thm]{Remark}
\DeclareMathOperator{\esssup}{ess \sup}
\DeclareMathOperator{\Adm}{\rm Adm}
\newcommand\blfootnote[1]{%
  \begingroup
  \renewcommand\thefootnote{}\footnote{#1}%
  \addtocounter{footnote}{-1}%
  \endgroup
}
\providecommand{\keywords}[1]
{
  \small	
  {\textit{Key words and phrases.}} #1
}
\providecommand{\subjclass}[1]
{
  \small	
  {\textit{$2020$ Mathematics Subject Classifications.}} #1
}
\begin{document}

\title{Stretch maps on the affine-additive group}

\author{Zolt\'an M. Balogh
  \and
  Elia  Bubani
  \and
  Ioannis D. Platis
}

\newcommand{\Addresses}{{
  \bigskip
  \footnotesize

  Z.M.~Balogh, (Corresponding author), \textsc{Universit\"at Bern,
Mathematisches Institut (MAI), \\
Sidlerstrasse 5,
3012 Bern,
Switzerland.}\par\nopagebreak
  \textit{E-mail address}, Z.M.~Balogh: \texttt{zoltan.balogh@unibe.ch}

  \medskip

  E.~Bubani,  \textsc{Universit\"at Bern,
Mathematisches Institut (MAI), 
Sidlerstrasse 5,
3012 Bern,
Switzerland.}\par\nopagebreak
  \textit{E-mail address}, E.~Bubani: \texttt{elia.bubani@unibe.ch}

  \medskip

  I.D.~Platis, \textsc{University of Patras, Department of Mathematics, Rion, 26504 Patras, Greece}\par\nopagebreak
  \textit{E-mail address}, I.D.~Platis: \texttt{idplatis@upatras.gr}

}}
\date{}

\maketitle
\centerline{\textit{Dedicated to the memory of Alexander Vasil'ev}}

\begin{abstract}
We define linear and radial stretch maps in the affine-additive group, and prove that they are minimizers of the mean quasiconformal distortion functional. For the proofs we use a method based on the notion of modulus of a curve family and the minimal stretching property (MSP) of the afore-mentioned maps. MSP relies on certain given curve families compatible with the respective geometric settings of the strech maps.
\end{abstract}

\blfootnote{\date{\today}}
\blfootnote{\subjclass{53C17, 30L10}}
\blfootnote{\keywords{Quasiconformal maps, Sub-Riemannian metric, Heisenberg group}}
\blfootnote{\thanks{This research was supported by the Swiss National Science Foundation, Grants Nr. 191978 and 228012}}

\setcounter{tocdepth}{1}
\tableofcontents 

\newpage

\section{Introduction and statement of the main results}
The Gr\"otzsch problem can be formulated as follows: let $a>1$,
consider a square $Q=(0,1)\times(0,1)$ and a rectangle $R=(0,a)\times(0,1)$. 
We ask if there is a conformal map which maps the horizontal edges of $Q$ into the
corresponding horizontal edges of $R_a$, and requiring respectively the same condition on the vertical edges. It turns out that there is no such conformal mapping; however, using complex notation one finds that the linear stretch map given by
$$
x+iy\mapsto ax+iy,
$$ 
solves the Gr\"otzsch problem and it is the closest to be conformal. The works of Gr\"otzsch \cite{G}, \cite{AP}, established criteria to measure how to approximate conformality. Gr\"otzsch also formulated the analogous problem between annuli in the complex plane and proved that the solution is the radial stretch map which, for $k>0$, is given by 
$$
z\mapsto|z|^{k-1}z.
$$ 
Via the formal substitution $z=e^{\xi+i\psi}$ one sees that the radial stretch can be seen as the linear map given by $(\xi,\psi)\mapsto(k\xi,\psi)$. It is also worth mentioning that Astala \cite{Ast}, used the radial stretch map to prove the sharpness on the optimal Sobolev exponent for $K$-quasiconformal mappings in the complex plane. \\
An \textit{extremal quasiconformal} map is a minimizer for the maximal distortion among some class of quasiconformal mappings. 
Methods involving the modulus of curve families were used to identify such extremality between annuli in the complex plane by Balogh, F\"assler and Platis \cite{BFP-C}. 
Motivated by Mostow's rigidity \cite{M-R}, the Heisenberg groups became an interesting setting for studying quasiconformal maps. The related theory reached an advanced study thanks to Pansu \cite{Pansu}, and Kor\'anyi-Reimann \cite{KR1}, \cite{KR2}. We underline that as in the Euclidean case (see Gehring and V\"ais\"al\"a \cite{GV}), quasiconformal maps of the Heisenberg group can distort the Hausdorff dimension with arbitrary fashion, see Balogh \cite{B}.
Balogh, F\"assler and Platis constructed appropriate analogues of linear and radial stretch maps for the first Heisenberg group, \cite{BFP-H-MM}. For the latter case the same authors subsequently proved that the radial stretch map is an essentially unique minimizer for the \textit{mean distortion functional}, \cite{BFP-H-U}.\\
In this paper we search for extremal quasiconformal maps on a different metric measure space: that is, the affine-additive group, which from the topological viewpoint is one of the eight $3$-dimensional Thurston geometries, see \cite{Thu}. 
In \cite{BBP1} the authors prove that the affine-additive group is conformally hyperbolic. This is in contrast with the Heisenberg group which is conformally parabolic, see Zorich in \cite{Z}. In particular, the two spaces are not quasiconformally equivalent. The main obstacle given by this inequivalence is that the maps which are quasiconformal in the first Heisenberg group are not compatible with the quasiconfomal maps of the affine-additive group. Therefore becomes interesting to study the quasiconformal maps on the affine-additive group as well as to seek extremal maps for a mean distortion functional adapted to the metric and the measure structures of the considered space. We implement a suitable version of the modulus method which is evidently useful for our purposes. As a consequence we are able to present new examples of self-mappings of the affine-additive group which share some remarkable features. 

Following the work of Balogh, F\"assler and Platis \cite{BFP-H-MM}, we define a mapping having the "minimal stretching property" (MSP) for a given curve family by adapting it to our particular case in the sub-Riemannian framework.

In the first part we build a modulus method which relies on the minimal stretching property of the map for a given curve family foliating the domain of definition.

In the second and third parts we show that this designed method detects quasiconformal maps between domains of the affine-additive group. The effect is that we obtain extremal stretch maps minimizing the mean distortion functional in the class of all quasiconformal mappings between two such domains.\\

We begin by describing the affine-additive group; in particular, its sub-Riemannian structure, and a preliminary metric notion of quasiconformal mappings for this group. For more details about the affine-additive group, we refer to \cite{Bubani}.

Our starting point is the hyperbolic plane, defined as 
\begin{equation*}
    \bH_\C^1:=\{x+iy\in\C:x>0,\,y\in\R\}.
\end{equation*}
For fixed $\lambda>0,\,t\in\R$, we consider affine transformations $f_{\lambda,t}:\bH_\C^1\to\bH_\C^1$, defined by
$$
f_{\lambda,t}(x+iy)=\lambda(x+iy)+it.
$$
We observe that $\bH_\C^1$ is in bijection with the set of transformations of the above form: to each point $\lambda+it$ we uniquely assign the transformation $f_{\lambda,t}$. Therefore we define a group structure on $\bH_\C^1$ by considering the composition of any two transformations $f_{\lambda',t'}$ and $f_{\lambda,t}$:
\begin{eqnarray*}
(f_{\lambda',t'}\circ f_{\lambda,t})(x+iy) = f_{\lambda'\lambda,\;\lambda't+t'}(x+iy).
\end{eqnarray*}
Based on this we introduce the group operation on $\bH_\C^1$ by
\begin{equation*}
    (\lambda'+it')\star_0(\lambda+it)=\lambda'(\lambda+it)+it'.
\end{equation*}

This operation is extended over the space 
$
\R\times\bH_\C^1
$ as follows: we take the Cartesian product of the additive group $(\R,+)$ and the group $(\bH_\C^1,\star_0)$. Then, if $p'=(a',\lambda'+it')$ and  $p=(a,\lambda+it)$ belong to $\R\times\bH_\C^1$, we have
\begin{equation}\label{eq-mult}
p'\star p=(a'+a,\lambda'(\lambda+it)+it')\in\R\times\bH_\C^1.
\end{equation}
The pair $\Aa=(\R\times\bH_\C^1,\;\star)$ shall be called the \textit{affine-additive group}.\\
We define the following $1$-form on $\Aa$: 
\begin{equation}\label{contact form omega}
\vartheta=\frac{dt}{2\lambda}-da.    
\end{equation}
Then the pair $(\Aa,\vartheta)$ is a contact manifold.  A basis for the Lie algebra comprises the following left invariant vector fields:
 \begin{equation*} U =\partial_a+2\lambda\partial_t, \  V =2\lambda\partial_\lambda, \ W =-\partial_a.
\end{equation*}
The horizontal tangent bundle of $\Aa$ is 
$$\mathcal{H}_{\Aa}=\ker\vartheta=\text{Span}\{U,V\}.$$
We denote by $\langle\cdot,\cdot\rangle_{\Aa}$ the sub-Riemannian metric on $\mathcal{H}_{\Aa}$ which makes $\{ U, V\}$ an orthonormal basis. An absolutely continuous curve $\gamma:[c,d]\rightarrow\Aa$ is called \textit{horizontal} if and only if $\dot\gamma(s)\in\ker\vartheta_{\gamma(s)}$ for almost every $s\in[c,d]$ and its horizontal velocity is given by $|\dot\gamma|_H=\sqrt{\langle\dot\gamma,\dot\gamma\rangle_\Aa}$. The associated \textit{sub-Riemannian distance} is denoted by $d_\Aa$.
The left-invariant Haar measure for $\Aa$ is
$
d\mu_\Aa=\frac{1}{\lambda^2}\,da\, d\lambda\, dt.
$
The affine-additive group as a metric measure space shall be denoted by $(\Aa, d_{\Aa}, \mu_{\Aa})$. The authors proved in \cite{BBP1} that the conformal type of the affine-additive group is hyperbolic.\\ 
A \textit{metric definition} of quasiconformality (in the sense of Heinonen and Koskela \cite{HK}) is given in terms of the sub-Riemannian distance on the affine-additive group as follows. If $\Omega,\Omega'$ are two domains in $\Aa$, a homeomorphism $f: \Omega\to \Omega'$ is called \textit{quasiconformal} if there exists $1\le H<\infty$ such that 
\begin{equation}\label{metric qc}
    \limsup_{r\rightarrow0}\frac{\sup_{d_\Aa(p,q)\leq r}d_\Aa(f(p),f(q))}{\inf_{d_\Aa(p,q)\geq r}d_\Aa(f(p),f(q))}=:H_f(p)\leq H,\quad \text{for all }p \in \Aa.
\end{equation}
There also exist an analytic as well as a geometric definition for quasiconformal mappings of $\Aa$. It turns out that both are equivalent to the above metric definition. This fact was already well-known for quasiconformal maps of $\C$ and also known for general sub-Riemannian manifolds; we provide an Appendix which illustrates the details for the case of $\Aa$. Such quasiconformal mappings share Sobolev regularity properties and also satisfy the contact condition, meaning that they preserve the contact form $\vartheta$, i.e. $f^*\vartheta=\sigma\vartheta$ almost everywhere for some non-vanishing smooth function $\sigma$. This imposes some rigidity on the quasiconformal mappings of smooth type; on the other hand, the contact condition is a quite straightforward requirement for a candidate map. To be more precise, let $f=(f_1,f_2+if_3):\Omega\to\Omega'$ be a quasiconformal mapping between domains in the affine-additive group, and let $f_I=f_2+if_3$. By defining the complex vector fields 
\begin{equation}\label{CVF}
Z=\frac{1}{2}(V-iU) ,\ \overline Z=\frac{1}{2}(V+iU),    
\end{equation}
it turns out that the horizontal derivatives 
$Zf_I\text{ and }\overline Zf_I$
exist both as distributions and almost everywhere. From now on, we will consider quasiconformal mappings to be \textit{orientation preserving}, i.e., $|Zf_I(p)|>|\overline{Z}f_I(p)|\text{ for almost every }p\in\Omega$. \\
We then define the \textit{Beltrami coefficient} and the \textit{distortion quotient} as 
$$
\mu_f(p)=\frac{\overline{Z}f_I}{Zf_I}(p) 
\text{ and }
 K(p,f)=\frac{|Zf_I|+|\overline{Z}f_I|}{|Zf_I|-|\overline{Z}f_I|}(p),
$$ for points $p\in\Aa$ where these expressions exist. 
In this paper we shall make an extended use of the square of the distortion quotient $K(p,f)^2$.   
By letting $K_f=\esssup_p K(p,f)$, we underline that any smooth contact transformation $f$ with $1\leq K_f<\infty$ is quasiconformal.\\
Given two domains $\Omega,\Omega'\subseteq\Aa$ and a certain given class $\mathcal{F}$ comprising quasiconformal mappings $f:\Omega\to\Omega'$, we may define the deviation of a quasiconformal map from conformality as follows. We say that an $f_0\in\mathcal{F}$ is \textit{extremal for a mean distortion functional} if  
\begin{equation}\label{MDF}
\int_\Omega K^2(p,f_0)\rho_0^4(p)\,d\mu_{\Aa}(p)=\min_{f\in\mathcal{F}}\int_\Omega K^2(p,f)\rho_0^4(p)\,d\mu_{\Aa}(p),    
\end{equation}
for a given density $\rho_0$. This $\rho_0$ is extremal for the modulus of a chosen curve family foliating the domain $\Omega$.\\
The modulus $\Mod_4(\Gamma)$ of a curve family $\Gamma$ is defined as follows. Let $\Adm(\Gamma)$ be the set of \textit{admissible densities}: that is, non-negative Borel
functions $\rho:\Aa\to[0,\infty]$ such that $\int_\gamma\rho\,d\ell\ge1$ for all rectifiable curves $\gamma\in\Gamma$.
Rectifiability here is understood in terms of the sub-Riemannian distance $d_\Aa$.
Then 
\begin{equation}\label{def 4-Mod}
    \Mod_4(\Gamma)=\inf_{\rho\in\Adm(\Gamma)}\int_\Aa \rho(p)^4\,d\mu_\Aa(p),
\end{equation}
see the Appendix \ref{Appendix} for details.\\
It is important to mention that the $\rho_0$ used in \eqref{MDF} corresponds to the extremal density which attains the infimum for $\Mod_4(\Gamma)$.
For example, we mention that the modulus of the curve family connecting the two boundaries of any revolution ring in the first Heisenberg group has been computed by Platis in \cite{P-RR}.
The modulus method and its applications to
extremal problems for conformal, quasiconformal mappings and the extension of
moduli onto Teichm\"uller spaces is treated in the book of Vasil'ev \cite{Vas}.
Moreover, the notion of modulus of a curve family has been extended to serve as further quasiconformal invariants in the works of Brakalova, Markina and Vasil'ev in \cite{BMV-mod} and in \cite{BMV-extr}. \\
An orientation preserving quasiconformal map  $f_0:\Omega\rightarrow\Omega'$ between domains in $\Aa$ has the \textit{minimal stretching property (MSP)} for a family $\Gamma_0$ of horizontal curves in $\Omega$ if for all $\gamma\in\Gamma_0$, $\gamma:[c,d]\to\Aa$, one has
    \begin{equation*}
    \mu_{f_0}(\gamma(s))\frac{\dot{\overline{\gamma_I}}(s)}{\dot{\gamma_I}(s)}<0 \text{ for almost every } s\in(c,d) \text{ with }\mu_{f_0}(\gamma(s))\neq0.    
    \end{equation*}
Note that in the latter definition we require the expression $\mu_{f_0}(\gamma(s))\frac{\dot{\overline{\gamma_I}}(s)}{\dot{\gamma_I}(s)}$ to be real-valued.\\ 
Suppose next that $\Delta$ is a domain in $\R^2$. Let $0\le c<d$ and let $\gamma:(c,d)\times\Delta \rightarrow\Omega$ be a diffeomorphism which foliates a bounded domain $\Omega$ in the affine-additive group with the property that $$\gamma(\cdot,\delta):[c,d]\rightarrow\overline\Omega$$ is a horizontal curve with $|\dot\gamma(s,\delta)|_H\neq0$ for all $\delta\in \Delta$ and $$d\mu_{\Aa}(\gamma(s,\delta))=|\dot\gamma(s,\delta)|_H^4\,ds\,d\nu(\delta)$$ for a measure $\nu$ on $\Delta$. We define the curve family $\Gamma_0=\{\gamma(\cdot,\delta):\delta\in \Delta\}$ and it will be shown that 
\begin{equation}\label{rho_0}
    \rho_0(p)=\left\{\begin{matrix}
    \frac{1}{(d-c)|\dot\gamma(\gamma^{-1}(p))|_H},& &p=\gamma(s,\delta)\in\Omega,\\
    \\
    0,& &p\notin\Omega,
  \end{matrix}\right.
\end{equation}
is an extremal density for  $\Mod_4(\Gamma_0)$.\\
Let $f_0:\Omega\to\Omega'$ be an orientation preserving quasiconformal mapping between domains in the affine-additive group. Let $\gamma$ be a foliation of $\Omega$ as described above. Assume as well that $f_0$ has the MSP for $\Gamma_0$; we then say that the distortion quotient $K(\cdot,f_0)$ is \textit{constant along every curve} $\gamma$ if and only if
\begin{equation}\label{K const on curves}
K(\gamma(s,\delta),f_0)\equiv K_{f_0}(\delta)\quad \text{for all } (s,\delta)\in(c,d)\times \Delta.    
\end{equation}

The following condition for extremality of the mean distortion integral is the main result from the first part of this paper.
\begin{thm}\label{Thm1}
    Assume that $f_0$ satisfies the minimal stretching property with respect to $\Gamma_0$ described as above. Let $\rho_0$ be the extremal density for $\Gamma_0$ and assume $K(\cdot,f_0)$ to be constant along every curve foliating $\Omega$. Let $\Gamma\supseteq\Gamma_0$ be a curve family such that $\rho_0\in\Adm(\Gamma)$ and let $\mathcal{F}$ be the class of quasiconformal maps $f:\Omega\rightarrow\Omega'$ such that 
    \begin{equation*}
    \Mod_4(f_0(\Gamma_0))\leq\Mod_4(f(\Gamma)).    
    \end{equation*}
Then 
\begin{equation*}
    \int_\Omega K^2(p,f_0)\rho_0^4(p)\,d\mu_{\Aa}(p)\leq\int_\Omega K^2(p,f)\rho_0^4(p)\,d\mu_{\Aa}(p)
\end{equation*}
for all $f\in\mathcal{F}$.
\end{thm}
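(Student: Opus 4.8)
The plan is to sandwich the two mean--distortion integrals between the moduli of the image families $f_0(\Gamma_0)$ and $f(\Gamma)$, and then to invoke the inequality $\Mod_4(f_0(\Gamma_0))\le\Mod_4(f(\Gamma))$ that defines $\mathcal F$. Concretely, I would prove an upper bound valid for every $f\in\mathcal F$,
\[
\Mod_4(f(\Gamma))\le\int_\Omega K^2(p,f)\,\rho_0^4(p)\,d\mu_\Aa(p),
\]
together with the exact identity, valid for the candidate $f_0$,
\[
\Mod_4(f_0(\Gamma_0))=\int_\Omega K^2(p,f_0)\,\rho_0^4(p)\,d\mu_\Aa(p).
\]
Chaining these with the definition of $\mathcal F$ then gives
\[
\int_\Omega K^2(p,f_0)\rho_0^4\,d\mu_\Aa=\Mod_4(f_0(\Gamma_0))\le\Mod_4(f(\Gamma))\le\int_\Omega K^2(p,f)\rho_0^4\,d\mu_\Aa,
\]
which is exactly the assertion of the theorem.

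The analytic heart is a pointwise length estimate along the foliating curves. For $\gamma\in\Gamma$ and $\eta:=f\circ\gamma$, the complex chain rule for the horizontal differential gives $\dot\eta_I=Zf_I(\gamma)\,\dot\gamma_I+\overline Zf_I(\gamma)\,\dot{\overline{\gamma_I}}$, whence $|\dot\eta_I|=|\dot\gamma_I|\,|Zf_I|\,\bigl|1+\mu_f\,\dot{\overline{\gamma_I}}/\dot\gamma_I\bigr|$. Since $|\mu_f|<1$ for orientation preserving maps and $|\dot{\overline{\gamma_I}}/\dot\gamma_I|=1$, the reverse triangle inequality yields the pointwise bound $|\dot\eta_I|\ge|\dot\gamma_I|\,\bigl(|Zf_I|-|\overline Zf_I|\bigr)$ for every curve, with equality precisely when $\mu_f\,\dot{\overline{\gamma_I}}/\dot\gamma_I$ is real and negative, i.e. exactly under the MSP. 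Writing $L:=|Zf_I|-|\overline Zf_I|\ge0$, I would record the identities $|Zf_I|(1-|\mu_f|)=L$ and, from the analytic Jacobian relation $J_f=(|Zf_I|^2-|\overline Zf_I|^2)^2$ established in the analytic formulation (Appendix), the decisive cancellation $J_f/L^4=K^2(\cdot,f)$.

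For the upper bound I would transport the admissible density. Since $\rho_0\in\Adm(\Gamma)$, I would set $\tilde\rho:=(\rho_0/L)\circ f^{-1}$ and check admissibility: for every $\gamma\in\Gamma$ the length bound gives $\int_{f\circ\gamma}\tilde\rho\,d\ell\ge\int_\gamma\rho_0\,d\ell\ge1$, so $\tilde\rho\in\Adm(f(\Gamma))$. Changing variables by $q=f(p)$ with $d\mu_\Aa(q)=J_f(p)\,d\mu_\Aa(p)$ and using $J_f/L^4=K^2$ then produces
\[
\Mod_4(f(\Gamma))\le\int_{\Omega'}\tilde\rho^4\,d\mu_\Aa=\int_\Omega\frac{\rho_0^4}{L^4}\,J_f\,d\mu_\Aa=\int_\Omega K^2(p,f)\,\rho_0^4\,d\mu_\Aa,
\]
which is the desired upper bound. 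Note this step uses only the general inequality, not the MSP, so it applies to all competitors $f\in\mathcal F$.

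For the exact identity I would show that $\eta:=f_0\circ\gamma$ is again an admissible foliation of $\Omega'$ for the extremal-density result around \eqref{rho_0}. Under the MSP the length estimate is an equality, so $|\dot\eta|_H=|\dot\gamma|_H\,L$; combining the transported volume $J_{f_0}\,|\dot\gamma|_H^4\,ds\,d\nu$ with $|\dot\eta|_H^4=|\dot\gamma|_H^4\,L^4$ gives the transported volume in the form $K^2(\cdot,f_0)\,|\dot\eta|_H^4\,ds\,d\nu$. At this point the hypothesis that $K(\cdot,f_0)$ is constant along every curve, $K(\gamma(s,\delta),f_0)\equiv K_{f_0}(\delta)$, is decisive: it factorizes the transported volume as $|\dot\eta|_H^4\,ds\,d\nu'$ with $d\nu'=K_{f_0}^2\,d\nu$, so \eqref{rho_0} applies to the foliation $\eta$. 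Evaluating both quantities in foliated coordinates then gives $\Mod_4(f_0(\Gamma_0))=\int_\Delta K_{f_0}(\delta)^2(d-c)^{-3}\,d\nu(\delta)=\int_\Omega K^2(p,f_0)\rho_0^4\,d\mu_\Aa$. The routine ingredients are the complex chain rule and the two changes of variables; the genuinely delicate points are the measurability and admissibility of $\tilde\rho$ and the validity of the area formula for Sobolev quasiconformal maps (Lusin's condition (N) and absolute continuity along the foliating curves), which I would take from the analytic theory of the Appendix. I expect the main obstacle to be this last identity: one must verify that $f_0\circ\gamma$ satisfies all the hypotheses of \eqref{rho_0}, in particular that the transported measure factorizes as $|\dot\eta|_H^4\,ds\,d\nu'$, and it is precisely here that constancy of $K$ along the curves, rather than the MSP alone, is indispensable.
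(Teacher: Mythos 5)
Your proposal is correct in substance and follows the same overall skeleton as the paper: the paper likewise proves the theorem by chaining
$\int_\Omega K^2(\cdot,f_0)\rho_0^4\,d\mu_\Aa=\Mod_4(f_0(\Gamma_0))\le\Mod_4(f(\Gamma))\le\int_\Omega K^2(\cdot,f)\rho_0^4\,d\mu_\Aa$,
the middle inequality being the definition of $\mathcal F$. Where you genuinely differ is in how the two outer relations are proved. For the upper bound, the paper (Proposition \ref{quasi-invariance for Mod_4}) pulls back an \emph{arbitrary} density $\tilde\rho\in\Adm(f(\Gamma))$ through $f$ using the maximal stretch $(|Zf_I|+|\overline Zf_I|)/2f_2$, derives an inequality for $\Mod_4(\Gamma)$, and then applies it to $f^{-1}$ together with the symmetry $K(p,f)=K(f(p),f^{-1})$; you instead push forward the specific density $\rho_0/L$ built from the minimal stretch. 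Your route avoids the inverse-map detour, but it carries a technical cost the paper's route sidesteps: admissibility of your transported density can fail on the images of the zero-modulus exceptional family where $f$ is not absolutely continuous or the chain rule fails, so you need that $f$ maps zero-modulus families to zero-modulus families --- a fact normally deduced from the pull-back inequality itself (cf.\ \eqref{qi 4-Mod}), so you must establish it first to avoid circularity; in the paper's pull-back argument all exceptional curves live in the domain, where the Fuglede-type results quoted in the Appendix apply directly. For the exact identity, the paper (Proposition \ref{P Mod4 f Gamma_0}) gives a self-contained two-sided argument (H\"older against an arbitrary admissible density for one inequality, an explicit push-forward density for the other), whereas you reduce to Proposition \ref{P Mod_4 Gamma_0} applied to the image foliation with the reweighted measure $d\nu'=K^2_{f_0}\,d\nu$ --- an elegant structural observation, but note that Proposition \ref{P Mod_4 Gamma_0} is stated for smooth diffeomorphic foliations, while for a general quasiconformal $f_0$ the image curves $f_0\circ\gamma$ are only absolutely continuous and a.e.\ horizontal, so you must rerun the proof of that proposition under weaker regularity rather than cite it verbatim; that rerun is in effect what the paper's two-sided argument does. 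Finally, two normalization slips that do not affect the structure: the chain rule (Lemma \ref{chain rule}) carries a factor $1/2\lambda(s)$, and your $L$ should be $\left(|Zf_I|-|\overline Zf_I|\right)/2f_2$, so that the cancellation $\mathcal{J}_{\mu_\Aa}/L^4=K^2$ is exactly \eqref{Volume derivative}.
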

Towards a first application of Theorem \ref{Thm1}, we define certain suitable domains in the affine-additive group.
Let $k>0$ and consider two domains $\Omega$ and $\Omega^k$ which shall be defined in detail in Section \ref{linear stretch}.
Further, consider the curve family  $\Gamma_0$ foliating $\Omega$ as well as its extremal density $\rho_0$ given by \eqref{rho_0}.\\
The extremal mapping $f_0$ will be the \textit{linear stretch map} $f_k:\overline{\Omega}\to \overline{\Omega^k}$: 
$$
f_k(a,\lambda+it)=(ka,\lambda+ikt).$$
We next formulate the main result of the second part. Denote by $\mathcal{F}_k$ the
class of all quasiconformal maps $\overline{\Omega}\to \overline{\Omega^k}$ with prescribed boundary conditions which will be rigorously set up in Section \ref{linear stretch}.
\begin{thm}\label{Thm1.5}
The linear stretch map $f_k:\Omega\to\Omega^k$ is an orientation preserving quasiconformal map. With the above notation for $\rho_0$, $f_k$ minimizes the mean distortion within the class $\mathcal{F}_k$: for all $f\in\mathcal{F}_k$ we have that
\begin{equation}
    K^2_{f_k}\leq\frac{\int_\Omega K^2(\cdot,f)\rho_0^4\,d\mu_{\Aa}}{\int_\Omega \rho_0^4\,d\mu_{\Aa}}.
\end{equation}
\end{thm}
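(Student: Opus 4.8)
The plan is to deduce Theorem~\ref{Thm1.5} from the general extremality criterion of Theorem~\ref{Thm1}, applied with $f_0=f_k$ and the foliating family $\Gamma_0$ of $\Omega$. Concretely, I would verify in turn that (i) $f_k$ is an orientation preserving quasiconformal map of constant distortion, (ii) $f_k$ satisfies the MSP for $\Gamma_0$, (iii) $K(\cdot,f_k)$ is constant along every curve of $\Gamma_0$, and (iv) the boundary conditions defining $\mathcal{F}_k$ force the modulus inequality $\Mod_4(f_k(\Gamma_0))\le\Mod_4(f(\Gamma))$, so that $\mathcal{F}_k$ is contained in the class $\mathcal{F}$ of Theorem~\ref{Thm1}. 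Once these are in place, Theorem~\ref{Thm1} yields $\int_\Omega K^2(\cdot,f_k)\rho_0^4\,d\mu_\Aa\le\int_\Omega K^2(\cdot,f)\rho_0^4\,d\mu_\Aa$ for every $f\in\mathcal{F}_k$, and dividing by $\int_\Omega\rho_0^4\,d\mu_\Aa$ (a finite positive quantity since $\Omega$ is bounded), together with (i) and (iii), gives the stated inequality.

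For step (i) I would compute the horizontal derivatives directly from $f_I=\lambda+ikt$. Applying $U=\partial_a+2\lambda\partial_t$ and $V=2\lambda\partial_\lambda$ gives $Vf_I=2\lambda$ and $Uf_I=2ik\lambda$, whence $Zf_I=\tfrac12(Vf_I-iUf_I)=\lambda(1+k)$ and $\overline Zf_I=\tfrac12(Vf_I+iUf_I)=\lambda(1-k)$. Since $k>0$ one has $|Zf_I|=\lambda(1+k)>\lambda|1-k|=|\overline Zf_I|$, so $f_k$ is orientation preserving, with Beltrami coefficient $\mu_{f_k}=\frac{1-k}{1+k}$ a real constant and distortion $K(\cdot,f_k)\equiv\max\{k,1/k\}=:K_{f_k}$ constant on all of $\Omega$. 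I would also record that $f_k$ preserves $\vartheta$ up to a nonvanishing factor, so that it is a smooth contact transformation with $1\le K_{f_k}<\infty$ and hence quasiconformal; the global constancy of $K(\cdot,f_k)$ settles (iii) at once and, in the final step, replaces $\int_\Omega K^2(\cdot,f_k)\rho_0^4\,d\mu_\Aa$ by $K_{f_k}^2\int_\Omega\rho_0^4\,d\mu_\Aa$.

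For step (ii), since $\mu_{f_k}$ is a nonzero real constant (for $k\ne1$), the MSP requirement $\mu_{f_k}(\gamma(s))\,\dot{\overline{\gamma_I}}(s)/\dot{\gamma_I}(s)<0$ reduces to a sign condition on $\overline{\dot\gamma_I}/\dot\gamma_I$, a unimodular quantity that is real precisely when $\dot\gamma_I$ is either purely real or purely imaginary. This forces the curves of $\Gamma_0$ to move in a single coordinate direction of $\lambda+it$: for $k>1$ (so $\mu_{f_k}<0$) one needs $\dot\gamma_I\in\R$, and for $k<1$ (so $\mu_{f_k}>0$) one needs $\dot\gamma_I\in i\R$. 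I would check that the foliation $\gamma(\cdot,\delta)$ of $\Omega$ constructed in Section~\ref{linear stretch} is chosen exactly so that this holds, which makes (ii) a direct verification against the explicit parametrization.

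The main obstacle is step (iv): translating the prescribed boundary conditions on $\mathcal{F}_k$ into the modulus inequality that defines membership in $\mathcal{F}$. Here I would argue geometrically: the boundary data force every $f\in\mathcal{F}_k$ to send the curves of $\Gamma$ to curves in $\Omega^k$ joining the same two distinguished boundary faces that $f_k(\Gamma_0)$ joins. By the monotonicity and overflowing properties of the $4$-modulus, the family $f(\Gamma)$ then has modulus at least that of the full family of curves connecting those faces, while $f_k(\Gamma_0)$ realizes exactly this connecting family, giving $\Mod_4(f_k(\Gamma_0))\le\Mod_4(f(\Gamma))$. I would also confirm that $\rho_0\in\Adm(\Gamma)$ for the chosen $\Gamma\supseteq\Gamma_0$, so that all hypotheses of Theorem~\ref{Thm1} are met. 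Verifying that the boundary conditions genuinely enforce the joining-the-same-faces property, and computing the relevant extremal modulus on $\Omega^k$, is the delicate part and is where the explicit geometry of $\Omega$ and $\Omega^k$ set up in Section~\ref{linear stretch} must be used in full.
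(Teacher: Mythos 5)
Your proposal is correct and follows essentially the same route as the paper: reduce to Theorem~\ref{Thm1} via the strategy of Remark~\ref{strategy}, compute $Zf_I=\lambda(1+k)$, $\overline Zf_I=\lambda(1-k)$ to get constant distortion $K_{f_k}=\max\{k,1/k\}$, verify MSP, and obtain the modulus inequality from the boundary conditions together with admissibility of $\rho_0$ for the enlarged connecting family $\Gamma$. You even correctly anticipate the paper's two-case structure (for $k<1$ the foliation must have $\dot\gamma_I\in i\R$, for $k>1$ it must have $\dot\gamma_I\in\R$), which is exactly why Section~\ref{linear stretch} uses two different domain setups; the only imprecision is that $f_k(\Gamma_0)$ is a proper subfamily of (not equal to) the full connecting family in $\Omega^k$, but monotonicity of the modulus is what your argument actually uses, so the conclusion stands.
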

Towards another application of Theorem \ref{Thm1}, we can also define some suitable domain in the affine-additive group, which looks natural, once we have considered a different type of coordinate system on the affine-additive group.
The \textit{cylindrical-logarithmic coordinates} are defined as 
\begin{equation*}
    (a,\lambda+it)=\left(a,e^{\xi+i\psi}\right),\quad(a,\xi,\psi)\in\R\times\R\times\left(-\frac{\pi}{2},\frac{\pi}{2}\right).
\end{equation*}
Let $0<k<1$, $r_0>1$ and $0<\psi_0<\frac{\pi}{2}$. Consider two truncated cylindric shells: $D_{r_0,\psi_0}$ and $D^k_{r_0,\psi_0}$, see for details Section \ref{trunc.td cyl shell}.
Further, consider the curve family $\Gamma_0$ foliating $D_{r_0,\psi_0}$ as well as its extremal density $\rho_0$ given by \eqref{rho_0}.\\
The extremal mapping $f_0$ will be the \textit{radial stretch map} $f_k:\overline{D_{r_0,\,\psi_0}}\to \overline{D^k_{r_0,\,\psi_0}}$, which in cylindrical-logarithmic coordinates reads as
$$
(a,\xi,\psi)\mapsto\left(a-\frac{\psi}{2}+\frac{1}{2}\tan^{-1}\left(\frac{\tan\psi}{k}\right),k\xi,\tan^{-1}\left(\frac{\tan\psi}{k}\right)\right).
$$
Finally, we formulate the main result of the third part of our work. Denote by $\mathcal{F}_k$ the
class of all quasiconformal maps $\overline{D_{r_0,\,\psi_0}}\to \overline{D^k_{r_0,\,\psi_0}}$ with prescribed boundary conditions (see Section \ref{trunc.td cyl shell} for details).
\begin{thm}\label{Thm2}
The radial stretch map $f_k:\overline{D_{r_0,\,\psi_0}}\to \overline{D^k_{r_0,\,\psi_0}}$ is an orientation preserving quasiconformal map. With the above notation for $\rho_0$, $f_k$ minimizes the mean distortion within the class $\mathcal{F}_k$: for all $f\in\mathcal{F}_k$ we have that
$$
\int_{D_{r_0,\psi_0}}K(p,f_k)^2\rho_0(p)^4\,d\mu_\Aa(p)\leq\int_{D_{r_0,\psi_0}}K(p,f)^2\rho_0(p)^4\,d\mu_\Aa(p)\,.
$$
\end{thm}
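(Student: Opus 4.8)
The plan is to verify that the radial stretch map $f_k$ fulfils every hypothesis of Theorem~\ref{Thm1} and then to apply that theorem with $f_0=f_k$, $\Omega=D_{r_0,\psi_0}$ and $\Omega'=D^k_{r_0,\psi_0}$. Three points must be checked: that $f_k$ is an orientation preserving quasiconformal map; that it enjoys the minimal stretching property for the curve family $\Gamma_0$ foliating $D_{r_0,\psi_0}$; and that the distortion quotient $K(\cdot,f_k)$ is constant along each leaf of that foliation. Once these hold, the prescribed boundary conditions make $\mathcal{F}_k$ a subclass of the class $\mathcal{F}$ of Theorem~\ref{Thm1} for a suitable $\Gamma\supseteq\Gamma_0$, and the asserted inequality is precisely its conclusion.

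First I would perform the distortion computation in cylindrical-logarithmic coordinates, where $\lambda=e^\xi\cos\psi$ and $t=e^\xi\sin\psi$. Inverting the relations $\partial_\xi=\tfrac{V}{2}+\tfrac{\tan\psi}{2}(U+W)$, $\partial_\psi=-\tfrac{\tan\psi}{2}V+\tfrac12(U+W)$ and $\partial_a=-W$ expresses $U$ and $V$ as explicit combinations of $\partial_a,\partial_\xi,\partial_\psi$. Since $f_I=e^{k\xi+i\psi'}$ with $\psi'=\tan^{-1}(\tan\psi/k)$ and $\psi'_\psi:=d\psi'/d\psi=k/(k^2\cos^2\psi+\sin^2\psi)$, a direct calculation gives
\[
Zf_I=f_I(k+\psi'_\psi)\cos\psi\,e^{-i\psi},\qquad \overline{Z}f_I=f_I(k-\psi'_\psi)\cos\psi\,e^{i\psi}.
\]
I expect this to yield $|Zf_I|>|\overline{Z}f_I|$ (orientation preservation, using $\psi'_\psi>k$ on $(-\tfrac\pi2,\tfrac\pi2)$), the Beltrami coefficient $\mu_{f_k}=\tfrac{k-\psi'_\psi}{k+\psi'_\psi}e^{2i\psi}$, and the distortion $K(\cdot,f_k)=(k^2\cos^2\psi+\sin^2\psi)^{-1}$. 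Combined with the contact condition $f_k^*\vartheta=\sigma\vartheta$, the bound $K_{f_k}=k^{-2}<\infty$ establishes quasiconformality; and because $K(\cdot,f_k)$ depends on $\psi$ alone, it is constant on every leaf along which $\psi$ is fixed.

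I would then take $\Gamma_0$ to be the family of radial horizontal leaves $\gamma(\cdot,\delta)$ on which $\psi$ is constant and $\xi$ varies over an interval $[c,d]$ determined by $r_0$, the $a$-coordinate being fixed by horizontality $\dot a=\tfrac{\tan\psi}{2}\dot\xi$; here $\delta=(\psi,a_0)$. For these leaves one computes $|\dot\gamma|_H=|\dot\xi|/(2\cos\psi)$ and, after the volume change of variables, $d\mu_\Aa=|\dot\gamma|_H^4\,ds\,d\nu(\delta)$ with $d\nu=16\cos^2\psi\,d\psi\,da_0$, so the foliation hypothesis of Theorem~\ref{Thm1} is met and $\rho_0$ is the extremal density \eqref{rho_0}. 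For the MSP, since $\gamma_I=e^{\xi(s)+i\psi}$ one has $\dot{\overline{\gamma_I}}/\dot{\gamma_I}=e^{-2i\psi}$, so that $\mu_{f_k}(\gamma(s))\,\dot{\overline{\gamma_I}}/\dot{\gamma_I}=\tfrac{k-\psi'_\psi}{k+\psi'_\psi}$, which is real and strictly negative because $\psi'_\psi>k$; this is exactly the required sign condition. As $f_k$ carries each $\psi$-leaf to a $\psi'$-leaf, $f_k(\Gamma_0)$ is again a radial family in $D^k_{r_0,\psi_0}$.

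With the hypotheses in hand, Theorem~\ref{Thm1} delivers the stated inequality as soon as $\mathcal{F}_k\subseteq\mathcal{F}$, i.e. as soon as $\Mod_4(f_k(\Gamma_0))\le\Mod_4(f(\Gamma))$ for every competitor $f$. I expect this modulus comparison to be the main obstacle: it is where the boundary conditions of Section~\ref{trunc.td cyl shell} are essential, forcing each admissible $f$ to map the connecting family $\Gamma$ onto a family joining the same distinguished boundary faces as $f_k$ does, so that monotonicity of the modulus gives $\Mod_4(f(\Gamma))\ge\Mod_4(f_k(\Gamma_0))$. The rest is bookkeeping: verifying $\rho_0\in\Adm(\Gamma)$ for the enlarged family, and checking that the truncation parameters $r_0>1$ and $0<\psi_0<\tfrac\pi2$ keep every quantity bounded and all integrals finite.
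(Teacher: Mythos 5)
Your proposal is correct and follows essentially the same route as the paper's own proof: the same radial foliation $\tilde\gamma(s,a,\psi)=\bigl(a+\tfrac{\tan\psi}{2}s,s,\psi\bigr)$ with $|\dot\gamma|_H=\tfrac{1}{2\cos\psi}$ and $d\nu=2^4\cos^2\psi\,da\,d\psi$, the same computations of the Beltrami coefficient (your $e^{2i\psi}\tfrac{k-\psi'_\psi}{k+\psi'_\psi}$ simplifies exactly to the paper's $e^{2i\psi}\tfrac{k^2-1}{k^2+2\tan^2\psi+1}$), of $K(\cdot,f_k)=(k^2\cos^2\psi+\sin^2\psi)^{-1}$ and of the MSP sign condition, followed by the same application of Theorem \ref{Thm1} with $\Gamma$ the family of all horizontal curves in $D_{r_0,\psi_0}$ joining $E$ to $F$. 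The two items you flag but do not carry out — the inequality $\Mod_4(f_k(\Gamma_0))\leq\Mod_4(f(\Gamma))$ via the boundary conditions and absolute continuity on almost every curve, and the admissibility $\rho_0\in\Adm(\Gamma)$ via $\int_\gamma\rho_0\,d\ell\geq\tfrac{1}{\log r_0}\int\dot\xi\,ds=1$ — are settled in the paper exactly by the arguments you sketch.
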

\noindent{\textbf{Structure of the paper.}} In Section \ref{Sec2} we define the minimal stretching property for a quasiconformal mapping with respect to some curve family, then we formulate a criterium establishing if such mapping is a minimizer for the mean distortion functional. In Section \ref{linear stretch} we present two geometric settings where the linear stretch map is minimizing the maximal distortion. In Section \ref{Sec4} we construct cylindrical-logarithmic coordinates for the affine-additive group and examine their properties. In Section \ref{trunc.td cyl shell} we show that the radial stretch map is a minimizer for the mean distortion functional. In Section \ref{OpPb} we formulate an open question arising as a consequence of the last theorem. We additionally provide an Appendix \ref{Appendix} in which we introduce and explain all the background material.


\section{Minimal stretching property and extremality}\label{Sec2}
We begin this section by describing the sub-Riemannian structure of $\Aa$ in detail.
Recall that an absolutely continuous curve $\gamma:[c,d]\to\Aa$, $\gamma(s)=(a(s),\lambda(s)+it(s)),\\s\in[c,d]$, is horizontal when $\dot\gamma(s)\in\ker\vartheta_{\gamma(s)}$ for almost every $s\in[c,d]$. This is equivalent to the o.d.e.
\begin{equation}\label{horiz ode}
    \frac{\dot t(s)}{2\lambda(s)}-\dot a(s)=0, \text{ a.e. } s\in[c,d].
\end{equation}
The horizontal velocity $|\dot\gamma|_H$ of $\gamma$ is defined by the relation
\begin{equation}\label{horiz speed}
|\dot\gamma|_H=\left(\langle\dot\gamma,U\rangle_{\Aa}^2+\langle\dot\gamma,V\rangle_{\Aa}^2\right)^{1/2}=\frac{\sqrt{\dot \lambda^2+\dot t^2}}{2\lambda}.
\end{equation}
Let $\pi:\Aa\to\bH^1_\C$ be the canonical projection given by $\pi(a,\lambda+it)=\lambda+it,\,(a,\lambda+it)\in\Aa$. The horizontal length of $\gamma$ is then given by
\begin{equation}
 \ell(\gamma)=\int_c^d \frac{\sqrt{\dot \lambda^2+\dot t^2}}{2\lambda}ds=\int_c^d \frac{|\dot{\gamma_I}|}{2\lambda}ds,
\end{equation}
where $\gamma_I=\pi\circ\gamma$.\\
The corresponding Carnot-Carath\'eodory distance $d_{\Aa}$ associated to the sub-Riemannian metric $\langle\cdot,\cdot\rangle_\Aa$ is defined for all $p, q  \in\Aa$ as follows:
\begin{equation*}
    d_{\Aa}(p,q)=\inf_{\gamma\in \Gamma_{p,q}}\{\ell(\gamma) \},
\end{equation*}
where $\Gamma_{p,q}$ is the following family of curves: 
$$
\Gamma_{p,q}=\{\gamma,\gamma:[0,1]\to\Aa \text{ horizontal and such that}\; 
       \gamma(0)=p,\;
       \gamma(1)=q\}.
$$
We underline here that since $\mathcal{H}_\Aa$ is bracket generating, the distance $d_{\Aa}$ is finite, geodesic and
induces the manifold topology (see Mitchell \cite{M}, Montgomery \cite{Mont}).
We are now able to give a result concerning an extremal density for the modulus of a curve family foliating a bounded domain in the affine-additive group.
\begin{prop}\label{P Mod_4 Gamma_0}
    Suppose $\Delta$ is a domain in $\R^2$. Let $0\le c<d$ and let $$\gamma:(c,d)\times\Delta \rightarrow\Omega$$ be a diffeomorphism which foliates a bounded domain $\Omega$ in the affine-additive group with the property that $$\gamma(\cdot,\delta):[c,d]\rightarrow\overline\Omega$$ is an horizontal curve with $|\dot\gamma(s,\delta)|_H\neq0$ for all $\delta\in \Delta$ and 
    $$
    d\mu_{\Aa}(\gamma(s,\delta))=|\dot\gamma(s,\delta)|_H^4\,ds\,d\nu(\delta)
    $$ for a measure $\nu$ on $\Delta$. Then 
\begin{equation}\label{rho_0 strategy}
    \rho_0(p)=\left\{\begin{matrix}
    \frac{1}{(d-c)|\dot\gamma(\gamma^{-1}(p))|_H},& &p=\gamma(s,\delta)\in\Omega,\\
    \\
    0,& &p\notin\Omega,
  \end{matrix}\right.
\end{equation}
is an extremal density for the curve family $\Gamma_0=\{\gamma(\cdot,\delta):\delta\in \Delta\}$ with $$\Mod_4(\Gamma_0)=\frac{1}{(d-c)^{3}}\int_\Delta d\nu(\delta).$$ 
Here, $\dot\gamma(s,\delta)=\frac{\partial}{\partial s}\gamma(s,\delta)$ for $(s,\delta)\in(c,d)\times\Delta$.
\end{prop}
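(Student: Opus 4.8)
The plan is to establish the two defining features of an extremal density: first that $\rho_0$ is admissible for $\Gamma_0$, and second that it minimizes the energy $\int_\Aa\rho^4\,d\mu_\Aa$ over all competitors in $\Adm(\Gamma_0)$; evaluating the energy at $\rho_0$ then yields the stated value of $\Mod_4(\Gamma_0)$. The guiding principle is that $\rho_0$ is built precisely so that the line integral along every leaf equals $1$ exactly, which is the equality case of the H\"older estimate that will drive the lower bound.

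First I would verify admissibility. Each leaf $\gamma(\cdot,\delta)$ is horizontal with $|\dot\gamma(s,\delta)|_H\neq0$, hence rectifiable with length element $d\ell=|\dot\gamma(s,\delta)|_H\,ds$. Substituting the definition of $\rho_0$ gives
$$
\int_{\gamma(\cdot,\delta)}\rho_0\,d\ell=\int_c^d\frac{|\dot\gamma(s,\delta)|_H}{(d-c)|\dot\gamma(s,\delta)|_H}\,ds=\frac{1}{d-c}\int_c^d ds=1
$$
for every $\delta\in\Delta$, so $\rho_0\in\Adm(\Gamma_0)$. Next I would compute the energy of $\rho_0$ using the hypothesis $d\mu_\Aa(\gamma(s,\delta))=|\dot\gamma(s,\delta)|_H^4\,ds\,d\nu(\delta)$: since $\rho_0$ vanishes off $\Omega$, the fourth powers of $|\dot\gamma|_H$ cancel and
$$
\int_\Aa\rho_0^4\,d\mu_\Aa=\int_\Delta\int_c^d\frac{|\dot\gamma(s,\delta)|_H^4}{(d-c)^4|\dot\gamma(s,\delta)|_H^4}\,ds\,d\nu(\delta)=\frac{1}{(d-c)^3}\int_\Delta d\nu(\delta),
$$
which is the claimed value.

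For the lower bound I would take an arbitrary $\rho\in\Adm(\Gamma_0)$ and exploit that the admissibility inequality holds on every leaf. Fixing $\delta$ and applying H\"older's inequality with exponents $4$ and $4/3$ to the splitting $\rho|\dot\gamma|_H=(\rho|\dot\gamma|_H)\cdot 1$ gives
$$
1\leq\int_c^d\rho(\gamma(s,\delta))|\dot\gamma(s,\delta)|_H\,ds\leq\left(\int_c^d\rho^4|\dot\gamma|_H^4\,ds\right)^{1/4}(d-c)^{3/4},
$$
whence $\int_c^d\rho(\gamma(s,\delta))^4|\dot\gamma(s,\delta)|_H^4\,ds\geq(d-c)^{-3}$ for each $\delta$. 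Integrating this against $d\nu$ and invoking once more the measure decomposition to identify the left-hand side with $\int_\Omega\rho^4\,d\mu_\Aa$, I obtain
$$
\int_\Aa\rho^4\,d\mu_\Aa\geq\int_\Omega\rho^4\,d\mu_\Aa\geq\frac{1}{(d-c)^3}\int_\Delta d\nu(\delta)=\int_\Aa\rho_0^4\,d\mu_\Aa.
$$
Taking the infimum over $\rho$ closes the argument.

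I expect the only delicate point to be the measure-theoretic bookkeeping rather than the inequalities: one must ensure that for a general Borel density $\rho$ the function $\delta\mapsto\int_c^d\rho(\gamma(s,\delta))^4|\dot\gamma(s,\delta)|_H^4\,ds$ is $\nu$-measurable and that the iterated integral genuinely reproduces $\int_\Omega\rho^4\,d\mu_\Aa$. This is where the hypotheses that $\gamma$ is a diffeomorphism and that $d\mu_\Aa$ disintegrates as $|\dot\gamma|_H^4\,ds\,d\nu$ do the work, reducing everything to Fubini's theorem; the equality case of H\"older, attained exactly when $\rho|\dot\gamma|_H$ is constant in $s$ as it is for $\rho_0$, confirms that $\rho_0$ is not merely a minimizer but the natural one singled out by the foliation.
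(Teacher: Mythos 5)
Your proposal is correct and follows essentially the same route as the paper's own proof: verify admissibility of $\rho_0$ along each leaf, evaluate its energy via the measure decomposition to get the upper bound, and derive the matching lower bound for an arbitrary admissible $\rho$ by applying H\"older's inequality with exponents $4$ and $4/3$ on each leaf and then integrating over $\Delta$. Your closing remark on the $\nu$-measurability of $\delta\mapsto\int_c^d\rho^4|\dot\gamma|_H^4\,ds$ is a point the paper leaves implicit, but it does not alter the argument.
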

\begin{proof}
    We show first that $\rho_0\in \text{Adm}(\Gamma_0)$: this is because for any $\gamma(\cdot,\delta)\in\Gamma_0$ we have
\begin{equation*}
    \int_{\gamma(\cdot,\delta)}\rho_0\, d\ell=\int_c^d \rho_0(\gamma(s,\delta))|\dot\gamma(s,\delta)|_H\,ds=1.
\end{equation*}
Since we assume the measure decomposition $d\mu_{\Aa}(\gamma(s,\delta))=|\dot\gamma(s,\delta)|_H^4\,ds\,d\nu(\delta)$, a direct computation yields 
\begin{align*}
    \int_\Omega \rho_0^4(p)\,d\mu_{\Aa}(p)=&\int_\Delta\int_c^d\rho_0^4(\gamma(s,\delta))|\dot\gamma(s,\delta)|_H^4\,ds\,d\nu(\delta)\\
    =&\frac{1}{(d-c)^{3}}\int_\Delta d\nu(\delta)\,. 
\end{align*}
Consequently, $$\Mod_4(\Gamma_0)\leq\frac{1}{(d-c)^{3}}\int_\Delta d\nu(\delta).$$ 
For the reverse inequality, consider an arbitrary density $\rho\in\text{Adm}(\Gamma_0)$. By using the admissibility of $\rho$ and then H\"older's inequality with conjugated exponents $4$ and $\frac{4}{3}$, we have
\begin{align*}
    1&\leq\int_{\gamma(\cdot,\delta)}\rho \,d\ell=\int_c^d\rho(\gamma(s,\delta))|\dot\gamma(s,\delta)|_H\,ds\\
    &\leq\left(\int_c^d\rho^4(\gamma(s,\delta))|\dot\gamma(s,\delta)|_H^4\,ds\right)^{\frac{1}{4}}\left(d-c\right)^{\frac{3}{4}}\,,
\end{align*}
for every $\delta\in \Delta$. Thus
\begin{align*}
   \frac{1}{(d-c)^{\frac{3}{4}}}\leq\left(\int_c^d\rho^4(\gamma(s,\delta))|\dot\gamma(s,\delta)|_H^4\,ds\right)^{\frac{1}{4}}.
\end{align*}
We raise the latter inequality to the $4$-th power and then we integrate with respect to $d\nu$ over $\Delta$ to eventually obtain
\begin{align*}
    \frac{1}{(d-c)^3}\int_\Delta d\nu(\delta)\leq\int_\Delta\int_c^d\rho^4(\gamma(s,\delta))|\dot\gamma(s,\delta)|_H^4\,ds\,d\mu(\delta)=\int_\Omega \rho^4(p)\,d\mu_{\Aa}(p).
\end{align*}
Our result follows by taking the infimum over all densities $\rho\in\text{Adm}(\Gamma_0)$.
\end{proof} 


\subsection{Minimization of the mean distortion.}
Following the work of Balogh, F\"assler and Platis \cite{BFP-H-MM}, we define the minimal stretching property as follows:
\begin{defn}\label{MSP-def}
    We say that an orientation preserving quasiconformal map  $f_0:\Omega\rightarrow\Omega'$ between domains in $\Aa$ has the \textit{minimal stretching property (MSP)} for a family $\Gamma_0$ of horizontal curves in $\Omega$ if for all $\gamma\in\Gamma_0$, $\gamma:[c,d]\to\Aa$, one has
    \begin{equation}\label{MSP-eq}
    \mu_{f_0}(\gamma(s))\frac{\dot{\overline{\gamma_I}}(s)}{\dot{\gamma_I}(s)}<0 \text{ for almost every } s\in[c,d] \text{ with }\mu_{f_0}(\gamma(s))\neq0.    
    \end{equation}
\end{defn}
If a map $f_0$ has the MSP for a curve family $\Gamma_0$, this means geometrically that $\Gamma_0$ consists of curves which are tangential to the direction of the least stretching of $f_0$. To make this precise, we state and prove the following 
\begin{lem}\label{chain rule}
    Let $f=(f_1,f_2+if_3):\Omega\to\Aa$ be a quasiconformal map on a domain $\Omega\subseteq\Aa$. Let $\Gamma$ be a curve family:
    $$
    \Gamma=\{\gamma(\cdot)=(a(\cdot),\lambda(\cdot)+it(\cdot)),\gamma:[c,d]\to\Omega\text{ horizontal}\,\}.
    $$ Then there exists a sub-family $\Gamma'\subset\Gamma$ of curves with $\Mod_4(\Gamma')=0$ and such that
    \begin{equation}\label{f tang curve}
        (f_I\circ\gamma)^\cdot(s)=\frac{1}{2\lambda(s)}\left(Zf_I(\gamma(s))\dot{\gamma_I}(s)+\overline{Z}f_I(\gamma(s))\dot{\overline{\gamma_I}}(s)\right) \quad  \text{for a.e.}\quad s\in(c,d),
    \end{equation}
     for all $\gamma\in\Gamma\setminus\Gamma'$.
\end{lem}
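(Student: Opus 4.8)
The statement is a chain rule for the composition $f_I\circ\gamma$, so the plan is to first establish the identity as a pointwise relation assuming enough regularity, and then to identify the exceptional family $\Gamma'$ with the curves along which the quasiconformal map fails to be absolutely continuous. Writing $\gamma(s)=(a(s),\lambda(s)+it(s))$, wherever $f_I$ is differentiable the Euclidean chain rule gives
$$(f_I\circ\gamma)^\cdot(s)=\dot a\,\partial_a f_I+\dot\lambda\,\partial_\lambda f_I+\dot t\,\partial_t f_I.$$
First I would re-express the coordinate partials in the frame $\{U,V,W\}$: inverting $U=\partial_a+2\lambda\partial_t$, $V=2\lambda\partial_\lambda$, $W=-\partial_a$ gives $\partial_a=-W$, $\partial_\lambda=\frac{1}{2\lambda}V$ and $\partial_t=\frac{1}{2\lambda}(U+W)$, whence
$$(f_I\circ\gamma)^\cdot=\Big(\frac{\dot t}{2\lambda}-\dot a\Big)Wf_I+\frac{\dot\lambda}{2\lambda}Vf_I+\frac{\dot t}{2\lambda}Uf_I.$$

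The geometric heart is the next step: since $\gamma$ is horizontal, the defining o.d.e. \eqref{horiz ode} forces $\frac{\dot t}{2\lambda}-\dot a=0$ almost everywhere, so the coefficient of $Wf_I$ vanishes and the non-horizontal direction drops out, leaving $\frac{1}{2\lambda}(\dot\lambda\,Vf_I+\dot t\,Uf_I)$. The remaining step is purely algebraic: substituting $V=Z+\overline Z$ and $U=i(Z-\overline Z)$ from \eqref{CVF} and regrouping yields $(\dot\lambda+i\dot t)Zf_I+(\dot\lambda-i\dot t)\overline Zf_I$, and since $\gamma_I=\lambda+it$ we have $\dot\gamma_I=\dot\lambda+i\dot t$ and $\dot{\overline{\gamma_I}}=\dot\lambda-i\dot t$, which is precisely \eqref{f tang curve}.

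It remains to justify the exceptional family, and I expect this to be the main obstacle. The computation above is valid only where $f_I$ is differentiable along $\gamma$, whereas a quasiconformal map of $\Aa$ is only known to belong to the horizontal Sobolev class, its derivatives $Zf_I,\overline Zf_I$ existing a.e. and distributionally (as recalled in the introduction and in Appendix \ref{Appendix}). The passage from "a.e. on $\Omega$ with respect to $\mu_\Aa$" to "for a.e. parameter $s$ along a given curve $\gamma$" is exactly what can fail, and I would control it by a Fuglede-type theorem: a horizontal Sobolev function is absolutely continuous along $\Mod_4$-almost every horizontal curve, and along such curves its derivative is computed by the a.e.-defined horizontal derivatives. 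Letting $\Gamma'$ collect the curves where either $f_I$ is not absolutely continuous or where $Zf_I,\overline Zf_I$ fail to be represented by their a.e. values produces a family with $\Mod_4(\Gamma')=0$, off which \eqref{f tang curve} holds for a.e. $s$. The genuine difficulty here is twofold: pinning down the precise Sobolev class to which a metrically quasiconformal $f$ belongs in the sub-Riemannian structure of $\Aa$, and invoking the correct absolute-continuity-along-curves statement adapted to $\Mod_4$ and to horizontal curves rather than Euclidean segments — both being content of the analytic theory, with the $\Mod_4$-negligibility of $\Gamma'$ serving as the bookkeeping device that upgrades the a.e.-on-$\Omega$ identity to the desired a.e.-along-$\gamma$ identity.
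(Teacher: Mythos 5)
Your proposal is correct and follows essentially the same route as the paper's proof: the Euclidean chain rule along the curve, the horizontality o.d.e.\ \eqref{horiz ode} eliminating the non-horizontal ($W$-direction) contribution, and the algebraic substitution $V=Z+\overline Z$, $U=i(Z-\overline Z)$ together with $\dot\gamma_I=\dot\lambda+i\dot t$, with the exceptional family $\Gamma'$ handled exactly as in the paper by the fact (cited there from \cite{BKR}) that quasiconformal maps are absolutely continuous on all curves outside a family of zero $4$-modulus. Your frame-inversion presentation and the explicit Fuglede-type justification are simply a more detailed organization of the same argument.
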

\begin{proof}
If $\gamma\in\Gamma$ is absolutely continuous then since $f$ is quasiconformal, we have that the image $f\circ\gamma$ is absolutely continuous up to a sub-family of curves having zero $4$-modulus, see \cite{BKR}. Therefore we can choose $\gamma$ such that $f\circ\gamma$ is differentiable almost everywhere. Choosing such an absolutely continuous curve $\gamma:[c,d]\to\Aa$ given by $\gamma(s)=(a(s),\lambda(s)+it(s))$, by using the chain rule we can write
\begin{align*}
    (f_I\circ\gamma)^\cdot(s)=&\nabla f_2(\gamma(s))\cdot\dot{\gamma}(s)+i\,\nabla f_3(\gamma(s))\cdot\dot{\gamma}(s),\quad\text{for a.e. } s\in(c,d).
\end{align*}
Using the o.d.e. \eqref{horiz ode} which holds for $\gamma$ as well as the identities
$$
\dot \lambda(s)=\frac{\dot\gamma_I(s)+\dot{\overline{\gamma_I}}(s)}{2},\quad \dot t(s)=\frac{\dot\gamma_I(s)-\dot{\overline{\gamma_I}}(s)}{2i},
$$
we obtain
$$
(f_I\circ\gamma)^\cdot(s)=\frac{1}{2\lambda(s)}\left(Zf_I(\gamma(s))\dot{\gamma_I}(s)+\overline{Z}f_I(\gamma(s))\dot{\overline{\gamma_I}}(s)\right), \quad \text{for a.e. }s\in(c,d).
$$
\end{proof}
Recalling that $|\dot\gamma(s)|_H=\frac{|\dot\gamma_I(s)|}{2\lambda(s)}$, for an orientation preserving quasiconformal map $f$ we have:
\begin{equation*}
    \left(\frac{|Zf_I(\gamma(s))|-|\overline{Z}f_I(\gamma(s))|}{2f_2(\gamma(s))}\right)|\dot{\gamma}(s)|_H
    \leq
    |(f\circ\gamma)^\cdot(s)|_H
    \leq
    \left(\frac{|Zf_I(\gamma(s))|+|\overline{Z}f_I(\gamma(s))|}{2f_2(\gamma(s))}\right)|\dot{\gamma}(s)|_H
\end{equation*}
for almost every $s$. If a map $f_0$ has the MSP for a family $\Gamma_0$, then by \eqref{MSP-eq} we have equality
\begin{equation*}
   |(f_0\circ\gamma)^\cdot(s)|_H
   =\left(\frac{|Z(f_0)_I(\gamma(s))|-|\overline{Z}(f_0)_I(\gamma(s))|}{2(f_0)_2(\gamma(s))}\right)|\dot{\gamma}(s)|_H\,.
\end{equation*}
The next type of modulus inequality adapts the statement and the proof of Theorem 18 in \cite{BFP-H-MM} to the geometric setting of the affine-additive group.
Before getting into the details, we need to briefly introduce a concept essential for our next arguments. Denote by $B_\Aa(p,r)$ the open ball with respect to the distance $d_\Aa$, centered at $p\in\Aa$ and with radius $r>0$. Let $f:\Omega\to\Aa$ be a quasiconformal map on a domain $\Omega\subseteq\Aa$ and define the volume derivative for $f$ with respect to $\mu_\Aa$ to be the limit
\begin{equation*}
    \mathcal{J}_{\mu_\Aa}(p,f):=\lim_{r\rightarrow0}\frac{\mu_{\Aa}(f(B_\Aa(p,r)))}{\mu_{\Aa}(B_\Aa(p,r))}.
\end{equation*}
The following identity holds:
\begin{equation}\label{Volume derivative}
    \mathcal{J}_{\mu_\Aa}(p,f)=\frac{1}{(2f_2(p))^4}\big(|Zf_I(p)|^2-|\overline Z f_I(p)|^2\big)^2,
\end{equation}
almost everywhere in $\Omega$.\\
For the proof of \eqref{Volume derivative}, see Lemma \ref{Jac WC} in the Appendix.
We can now prove the following:
\begin{prop}\label{quasi-invariance for Mod_4}
    Suppose that $f : \Omega \rightarrow\Omega'$ is a quasiconformal map between two domains in $\Aa$
and $\Gamma$ is a family of curves in $\Omega$. Then
\begin{equation}\label{useful}
    \Mod_4(f(\Gamma))\leq\int_\Omega K(p,f)^2 \rho(p)^4\,d\mu_{\Aa}(p) \text{ for all } \rho\in\Adm(\Gamma).
\end{equation}
\end{prop}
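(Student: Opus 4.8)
The plan is to prove the inequality by the classical transformation-of-density method: starting from an arbitrary $\rho\in\Adm(\Gamma)$, I would manufacture a competitor density $\tilde\rho$ that is admissible for the image family $f(\Gamma)$ and whose $4$-energy equals exactly $\int_\Omega K(p,f)^2\rho^4\,d\mu_{\Aa}$; the asserted bound then follows immediately from the definition of $\Mod_4$ as an infimum over admissible densities. Writing $\ell_f(p)=\frac{|Zf_I(p)|-|\overline Z f_I(p)|}{2f_2(p)}$ for the minimal horizontal stretch factor of $f$ at $p$ (strictly positive a.e. since $f$ is orientation preserving), the natural candidate is
$$
\tilde\rho(q)=\begin{cases}\dfrac{\rho(f^{-1}(q))}{\ell_f(f^{-1}(q))}, & q\in\Omega',\\ 0,& q\notin\Omega',\end{cases}
$$
which is well defined because the quasiconformal $f$ is a homeomorphism onto $\Omega'$.

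First I would verify admissibility. For a curve $\gamma\in\Gamma$, $\gamma:[c,d]\to\Omega$, lying in the good subfamily $\Gamma\setminus\Gamma'$ furnished by Lemma \ref{chain rule}, the lower length estimate recorded just before the statement gives $|(f\circ\gamma)^\cdot(s)|_H\ge \ell_f(\gamma(s))\,|\dot\gamma(s)|_H$ for a.e. $s$. Substituting the definition of $\tilde\rho$ and integrating along $f\circ\gamma$ yields
$$
\int_{f\circ\gamma}\tilde\rho\,d\ell=\int_c^d\tilde\rho(f(\gamma(s)))\,|(f\circ\gamma)^\cdot(s)|_H\,ds\ge\int_c^d\rho(\gamma(s))\,|\dot\gamma(s)|_H\,ds=\int_\gamma\rho\,d\ell\ge 1.
$$
Thus $\tilde\rho$ is admissible for $f(\Gamma)$ apart from the exceptional family $f(\Gamma')$. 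Since $\Mod_4(\Gamma')=0$ and quasiconformal maps carry null families to null families, $\Mod_4(f(\Gamma'))=0$, and the standard fact that a density admissible off a zero-modulus subfamily still bounds the modulus lets me treat $\tilde\rho$ as admissible for all of $f(\Gamma)$.

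Next I would compute the energy of $\tilde\rho$ via the change-of-variables (area) formula for $f$, namely $\int_{\Omega'}g\,d\mu_{\Aa}=\int_\Omega (g\circ f)\,\mathcal{J}_{\mu_\Aa}(\cdot,f)\,d\mu_{\Aa}$ with $g=\tilde\rho^4$, obtaining
$$
\int_{\Omega'}\tilde\rho^4\,d\mu_{\Aa}=\int_\Omega\frac{\rho(p)^4}{\ell_f(p)^4}\,\mathcal{J}_{\mu_\Aa}(p,f)\,d\mu_{\Aa}(p).
$$
Inserting the volume derivative identity \eqref{Volume derivative}, a short factorization using $|Zf_I|^2-|\overline Z f_I|^2=(|Zf_I|-|\overline Z f_I|)(|Zf_I|+|\overline Z f_I|)$ gives $\mathcal{J}_{\mu_\Aa}(p,f)/\ell_f(p)^4=\left(\frac{|Zf_I|+|\overline Z f_I|}{|Zf_I|-|\overline Z f_I|}\right)^2=K(p,f)^2$, so the right-hand side equals $\int_\Omega K(p,f)^2\rho^4\,d\mu_{\Aa}$. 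Combined with the admissibility of $\tilde\rho$ and the definition of $\Mod_4(f(\Gamma))$, this closes the argument.

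The main obstacle is not the algebra but the measure-theoretic bookkeeping: one must guarantee that $f$ is absolutely continuous along almost every curve of $\Gamma$ (so the length estimate is available), that the exceptional families genuinely carry zero $4$-modulus and remain null under $f$, and---most delicately---that the area formula with Jacobian $\mathcal{J}_{\mu_\Aa}(\cdot,f)$ is valid for a merely quasiconformal, not smooth, map $f$. These rest on the Sobolev regularity and a.e. differentiability of quasiconformal maps of $\Aa$ and on identity \eqref{Volume derivative}; granting the background collected in the Appendix, the transformation scheme proceeds exactly as in the Heisenberg setting of \cite{BFP-H-MM}.
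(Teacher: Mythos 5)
Your proposal is correct in its core mechanics, but it runs the argument in the opposite direction from the paper, and the difference is structurally meaningful. You push the density forward: starting from $\rho\in\Adm(\Gamma)$ you build $\tilde\rho$ on $\Omega'$ by dividing by the minimal stretch $\ell_f$, prove admissibility for $f(\Gamma)$ via the lower derivative bound, and compute the energy exactly as $\int_\Omega K(\cdot,f)^2\rho^4\,d\mu_{\Aa}$ — this is a clean one-pass argument that never needs the inverse map. The paper instead pulls back: it takes an arbitrary $\tilde\rho\in\Adm(f(\Gamma))$, multiplies by the \emph{maximal} stretch to get a density admissible for the rectifiable curves of $\Gamma$ on which $f$ is absolutely continuous, obtains $\Mod_4(\Gamma)\leq\int_{\Omega'}K^2(f^{-1}(q),f)\,\tilde\rho^4(q)\,d\mu_{\Aa}(q)$, and only then applies this inequality to $f^{-1}$ and the family $f(\Gamma)$, finishing with the identity $K(p,f)=K(f(p),f^{-1})$. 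What the paper's arrangement buys is precisely the point you flag as delicate: in the pull-back direction all exceptional-curve bookkeeping happens on the domain of the map being used, where absolute continuity on almost every curve (and Lemma \ref{chain rule}) controls the bad family directly, so one never needs to know that $f$ carries zero-modulus families to zero-modulus families. Your push-forward direction genuinely needs that fact (both for $f(\Gamma')$ and for images of non-rectifiable curves of $\Gamma$), and it is not established anywhere in the paper; within the paper's own toolkit its natural proof is exactly the pull-back inequality applied to $f^{-1}$, i.e.\ the paper's argument, while the alternative is to import the geometric-definition equivalence from the cited literature (\cite{KW}, \cite{HKST-paper}). So your route is admissible provided you make that citation explicit, and in exchange it dispenses with the inverse-map detour and the distortion identity \eqref{distortion quotient identity}; the paper's route is longer but self-contained modulo only the AC-on-curves and change-of-variables facts that both proofs share.
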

\begin{proof}
Let $\Gamma_0$ be the family of all rectifiable curves in $\Gamma$ on which $f$ is absolutely continuous (the non-rectifiable curves have modulus zero). Since $f$ is quasiconformal, we have $\Mod_4(\Gamma)=\Mod_4(\Gamma_0)$.
Throughout the proof we shall assume $f$ to be differentiable in the sense of \cite{MM} on curves in $\Gamma_0$ almost everywhere on their domain of definition, for otherwise one can consider the argument in the beginning of the proof on Theorem 18 in \cite{BFP-H-MM}.\\ 
We now take an arbitrary admissible density $\tilde\rho\in\Adm(f(\Gamma))$ and we assign to it a pull-back density $\rho_{\tilde\rho}$ defined by
\begin{equation*}
    \rho_{\tilde\rho}(p)=\left\{\begin{matrix}
    \tilde\rho(f(p))\frac{|Zf_I(p)|+|\overline Z f_I(p)|}{2f_2(p)},& & p\in\Omega\\
    \\
    0,& &p\in\Aa\setminus\Omega.
  \end{matrix}\right.
\end{equation*}
We will show that $\rho_{\tilde\rho}$ is admissible for $\Gamma_0$. To this end, let $\gamma:[a,b]\rightarrow\Omega$ be an arbitrary curve in $\Gamma_0$. By definition of $\Gamma_0$, it is rectifiable and therefore it has a parametrization by arc-length, $\tilde\gamma=(\tilde a,\tilde \lambda+i\tilde t):[0,\ell(\gamma)]\rightarrow\Omega$. We know that $f$ is quasiconformal and that $\tilde\gamma(s)$ is horizontal; due to Lemma \ref{chain rule} this reasoning leads to 
\begin{equation*}
|(f\circ\tilde\gamma)^\cdot(s)|_H=\frac{1}{2f_2(\tilde\gamma(s))}\left|Zf_I(\tilde\gamma(s))\,\frac{\dot{\tilde{\gamma}}_I(s)}{2\tilde \lambda(s)}+\overline{Z}f_I(\tilde\gamma(s)) \,\frac{\dot{\overline{\tilde{\gamma}}}_I(s)}{2\tilde \lambda(s)}\right| \text{ for a.e. } s\in[0,\ell(\gamma)]
\end{equation*}
which gives
\begin{equation*}
|(f\circ\tilde\gamma)^\cdot(s)|_H\leq\frac{|Zf_I(\tilde\gamma(s))|+|\overline{Z}f_I(\tilde\gamma(s))|}{2f_2(\tilde\gamma(s))}
|\dot{\tilde{\gamma}}(s)|_H \text{ for a.e. } s\in[0,\ell(\gamma)].
\end{equation*}
We notice that $f\circ\tilde\gamma$ is absolutely continuous and the latter inequality yields
\begin{align*}
    \int_\gamma \rho_{\tilde\rho} \,d\ell=& \int_0^{\ell(\gamma)} \rho_{\tilde\rho}(\tilde\gamma(s))|\dot{\tilde{\gamma}}(s)|_H\,ds\\
    =& \int_0^{\ell(\gamma)} \tilde\rho(f(\tilde\gamma(s)))\frac{|Zf_I(\tilde\gamma(s))|+|\overline{Z}f_I(\tilde\gamma(s))|}{2f_2(\tilde\gamma(s))}|\dot{\tilde{\gamma}}(s)|_H\,ds\\
    \geq& \int_0^{\ell(\gamma)} \tilde\rho(f(\tilde\gamma(s)))|(f\circ\tilde\gamma)^\cdot(s)|_H\,ds=\int_{f\circ\tilde\gamma}\tilde\rho\,d\ell=\int_{f\circ\gamma}\tilde\rho\,d\ell\geq1.
\end{align*}
We deduce that $\rho_{\tilde\rho}\in\Adm(\Gamma_0)$. Making use of \eqref{Volume derivative}, the previous fact allows us to conclude as follows:
\begin{align*}
    \Mod_4(\Gamma_0)=&\inf_{\rho\in\Adm(\Gamma_0)}\int_\Omega \rho^4(p)\,d\mu_{\Aa}(p)\\
                    \leq&\int_\Omega \rho^4_{\tilde\rho}(p)\,d\mu_{\Aa}(p)=\int_\Omega \tilde\rho^4(f(p))\frac{(|Zf_I(p)|+|\overline Z f_I(p)|)^4}{(2f_2(p))^4}\,d\mu_{\Aa}(p)\\
                    =&\int_\Omega \tilde\rho^4(f(p))\left(\frac{|Zf_I(p)|+|\overline Zf_I(p)|}{|Zf_I(p)|-|\overline Zf_I(p)|}\right)^2\frac{\left(|Zf_I(p)|^2-|\overline Z f_I(p)|^2\right)^2}{(2f_2(p))^4}\,d\mu_{\Aa}(p)\\
                    =&\int_\Omega \tilde\rho^4(f(p)) K^2(p,f)\mathcal{J}_{\mu_\Aa}(p,f)\,d\mu_{\Aa}(p)\\
                    =&  \int_{\Omega'} \tilde\rho^4(q) K^2(f^{-1}(q),f)\,d\mu_{\Aa}(q),
\end{align*}
for all $\tilde\rho\in\Adm(f(\Gamma))$. 
We may apply the previous inequality to the quasiconformal map $f^{-1}$ and the curve family $f(\Gamma)$. Thus
\begin{equation}\label{mod ineq 1}
    \Mod_4(f(\Gamma))\leq\int_\Omega K^2(f(p),f^{-1}) \rho^4(p)\,d\mu_{\Aa}(p), 
\end{equation} 
for all $\rho\in\Adm(\Gamma)$. It is straightforward to verify the identity
\begin{equation}\label{distortion quotient identity}
    K(p,f)=K(f(p),f^{-1}) \text{ a. e. in }\Omega.
\end{equation}
Combining \eqref{mod ineq 1} and \eqref{distortion quotient identity} we obtain the desired result.
\end{proof}

\begin{rem}
    We want to underline two other important consequences following from the proof of Proposition \ref{quasi-invariance for Mod_4}. In the first place, we have that
    \begin{equation*}
    \Mod_4(\Gamma)\leq\int_{\Omega'} K(f^{-1}(q),f)^2 \tilde\rho(q)^4\,d\mu_{\Aa}(q) \text{ for all } \tilde\rho\in\Adm(f(\Gamma)),
\end{equation*}
and thus 
\begin{equation}\label{qi 4-Mod}
    \frac{1}{K^2_f}\Mod_4(\Gamma)\leq\Mod_4(f(\Gamma))\leq K^2_f\, \Mod_4(\Gamma).
\end{equation}
\end{rem}
Secondly, we have the following proposition in which conditions both on the foliation of the domain as well as on the quasiconformal map are being set in order to obtain the equality in the modulus inequality for the mean distortion. 
\begin{prop}\label{P Mod4 f Gamma_0}
     Let $f_0:\Omega\rightarrow\Omega'$ be an orientation preserving quasiconformal map between domains in the affine-additive group. As described above, let $\gamma$ be the foliation of $\Omega$ and let $\Gamma_0$ be the curve family. Assume further that $f_0$ has the MSP for $\Gamma_0$ and that 
\begin{equation}\label{K const along fol - strategy}
    K(\gamma(s,\delta),f_0)\equiv K_{f_0}(\delta)
\end{equation}      
for all $(s,\delta)\in(c,d)\times \Delta$. Then 
\begin{equation}
    \Mod_4(f_0(\Gamma_0))=\frac{1}{(d-c)^{3}}\int_\Delta K^2_{f_0}(\delta)\,d\nu(\delta)\,.
\end{equation}
\end{prop}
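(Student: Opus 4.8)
The plan is to establish the claimed equality by squeezing $\Mod_4(f_0(\Gamma_0))$ between two matching bounds. The upper bound is immediate from Proposition \ref{quasi-invariance for Mod_4}: taking $\Gamma=\Gamma_0$, $f=f_0$, and the admissible density $\rho=\rho_0$ furnished by Proposition \ref{P Mod_4 Gamma_0}, inequality \eqref{useful} reads
\[
\Mod_4(f_0(\Gamma_0))\le\int_\Omega K(p,f_0)^2\,\rho_0(p)^4\,d\mu_{\Aa}(p).
\]
To evaluate the right-hand side I would insert the measure decomposition $d\mu_{\Aa}(\gamma(s,\delta))=|\dot\gamma(s,\delta)|_H^4\,ds\,d\nu(\delta)$ and use that, along each leaf, $\rho_0(\gamma(s,\delta))^4|\dot\gamma(s,\delta)|_H^4=(d-c)^{-4}$ by the very form \eqref{rho_0 strategy} of $\rho_0$, while $K(\gamma(s,\delta),f_0)^2=K_{f_0}(\delta)^2$ by the hypothesis \eqref{K const along fol - strategy}. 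Carrying out the $s$-integration over $(c,d)$ then produces exactly $\frac{1}{(d-c)^3}\int_\Delta K_{f_0}^2(\delta)\,d\nu(\delta)$, the asserted upper bound.

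For the reverse inequality I would work directly with the image foliation $\tilde\gamma(\cdot,\delta):=f_0\circ\gamma(\cdot,\delta)$, which sweeps out $\Omega'=f_0(\Omega)$ over the same parameter set $(c,d)\times\Delta$. The crucial input is the MSP: by \eqref{MSP-eq} the mixed term in Lemma \ref{chain rule} drops out, giving the pointwise equality
\[
|\dot{\tilde\gamma}(s,\delta)|_H=\frac{|Z(f_0)_I(\gamma(s,\delta))|-|\overline{Z}(f_0)_I(\gamma(s,\delta))|}{2(f_0)_2(\gamma(s,\delta))}\,|\dot\gamma(s,\delta)|_H
\]
for almost every $s$ and $\nu$-almost every leaf. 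Combining this with the volume-derivative formula \eqref{Volume derivative} and the definition of $K$ yields the key algebraic identity
\[
\mathcal{J}_{\mu_\Aa}(\gamma(s,\delta),f_0)\,|\dot\gamma(s,\delta)|_H^4=K(\gamma(s,\delta),f_0)^2\,|\dot{\tilde\gamma}(s,\delta)|_H^4=K_{f_0}(\delta)^2\,|\dot{\tilde\gamma}(s,\delta)|_H^4,
\]
which is nothing but the factorization $(|Z(f_0)_I|^2-|\overline{Z}(f_0)_I|^2)^2=(|Z(f_0)_I|-|\overline{Z}(f_0)_I|)^2(|Z(f_0)_I|+|\overline{Z}(f_0)_I|)^2$.

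Now take an arbitrary $\tilde\rho\in\Adm(f_0(\Gamma_0))$. Applying H\"older's inequality with conjugate exponents $4$ and $4/3$ along each image curve $\tilde\gamma(\cdot,\delta)$, exactly as in the proof of Proposition \ref{P Mod_4 Gamma_0}, gives
\[
\frac{1}{(d-c)^3}\le\int_c^d\tilde\rho^4(\tilde\gamma(s,\delta))\,|\dot{\tilde\gamma}(s,\delta)|_H^4\,ds.
\]
I would then multiply by $K_{f_0}(\delta)^2$, integrate in $\delta$ against $\nu$, and use the change-of-variables formula for the quasiconformal map $f_0$ together with the identity above to recognise the right-hand side as $\int_{\Omega'}\tilde\rho^4\,d\mu_{\Aa}$; taking the infimum over admissible $\tilde\rho$ delivers the matching lower bound, and hence the equality.

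The main obstacle is purely in the regularity bookkeeping rather than in the algebra. One must justify that $f_0\circ\gamma$ is absolutely continuous and satisfies the chain rule \eqref{f tang curve} for $\nu$-almost every leaf, which is guaranteed by discarding the zero-modulus exceptional family of Lemma \ref{chain rule}, and that the change of variables $\int_{\Omega'}\tilde\rho^4\,d\mu_{\Aa}=\int_\Omega\tilde\rho^4(f_0)\,\mathcal{J}_{\mu_\Aa}(\cdot,f_0)\,d\mu_{\Aa}$ is valid, which rests on the Sobolev regularity, Lusin's condition (N), and the area formula for quasiconformal maps of $\Aa$. Since $f_0$ need not be a diffeomorphism, I would phrase the lower bound through this change of variables, rather than by applying Proposition \ref{P Mod_4 Gamma_0} to the image foliation directly, so that no smoothness of the image leaves is ever required.
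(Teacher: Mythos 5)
Your proof is correct, and one half of it takes a genuinely different route from the paper. Your lower bound is essentially the paper's own argument: take an arbitrary $\tilde\rho\in\Adm(f_0(\Gamma_0))$, apply H\"older along each image leaf, use the MSP equality $|\dot{\tilde\gamma}|_H=\frac{|Z(f_0)_I|-|\overline{Z}(f_0)_I|}{2(f_0)_2}\,|\dot\gamma|_H$, the factorization $\bigl(|Z(f_0)_I|^2-|\overline{Z}(f_0)_I|^2\bigr)^2=\bigl(|Z(f_0)_I|-|\overline{Z}(f_0)_I|\bigr)^2\bigl(|Z(f_0)_I|+|\overline{Z}(f_0)_I|\bigr)^2$ turning $K_{f_0}^2(\delta)\,|\dot{\tilde\gamma}|_H^4$ into $\mathcal{J}_{\mu_\Aa}\,|\dot\gamma|_H^4$, then the measure decomposition and the change of variables; this is step for step what the paper does. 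Where you depart from the paper is the upper bound: the paper constructs an explicit push-forward density $\rho_0'$, equal at $q=f_0(\gamma(s,\delta))$ to $\frac{2(f_0)_2(\gamma(s,\delta))}{(d-c)\,|\dot\gamma(s,\delta)|_H\left(|Z(f_0)_I(\gamma(s,\delta))|-|\overline{Z}(f_0)_I(\gamma(s,\delta))|\right)}$, verifies its admissibility for $f_0(\Gamma_0)$ by invoking MSP a second time, and computes its energy to be exactly $\frac{1}{(d-c)^3}\int_\Delta K_{f_0}^2\,d\nu$. You instead quote Proposition \ref{quasi-invariance for Mod_4} with $\Gamma=\Gamma_0$, $f=f_0$, $\rho=\rho_0$ and evaluate $\int_\Omega K^2(\cdot,f_0)\rho_0^4\,d\mu_\Aa$ via the leaf-wise constancy of $K$ and $\rho_0^4\,d\mu_{\Aa}=(d-c)^{-4}\,ds\,d\nu$. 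This is legitimate (that proposition is proved before and independently of the present statement), it is shorter, and it has the conceptual advantage of isolating MSP as being needed only for the lower bound. What the paper's construction buys in exchange is an explicit extremal density for the image family $f_0(\Gamma_0)$; this is precisely the object ${f_k}_\#\rho_0$ reused in Section \ref{OpPb} to obtain the identity $\Mod_4(f_k(\Gamma))=\Mod_4(f_k(\Gamma_0))$, so the paper's longer route is not wasted effort. One small point you could make explicit in your regularity bookkeeping: a subfamily of $\Gamma_0$ of zero $4$-modulus corresponds to a $\nu$-null set of parameters $\delta$ (this follows from the lower-bound half of the proof of Proposition \ref{P Mod_4 Gamma_0} applied to the subfamily), so discarding the exceptional family of Lemma \ref{chain rule} indeed leaves the $\Delta$-integrals unchanged.
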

\begin{proof}
Let $\rho'\in \text{Adm}(f_0(\Gamma_0))$ be an arbitrary density. Since $f_0$ is quasiconformal, we know that the image under $f_0$ of an horizontal curve $\gamma(\cdot,\delta)\in\Gamma_0$ is still a horizontal curve up to a sub-family of curves contained in $\Gamma_0$ with vanishing $4$-modulus. Thanks to the minimal stretching property of $f_0$ we find
\begin{align*}
    1\leq&\int_c^d\rho'(f_0\circ \gamma(s,\delta))|{(f_0\circ\gamma)}^\cdot(s,\delta)|_H\,ds\\
    =&\int_c^d \rho'(f_0\circ \gamma(s,\delta))\frac{\big|Z(f_0)_I(\gamma(s,\delta))\big|-\big|\overline{Z}(f_0)_I(\gamma(s,\delta))\big|}{2(f_0)_2(\gamma(s,\delta))}|\dot\gamma(s,\delta)|_H\,ds.
\end{align*} 
We apply H\"older's inequality with conjugated exponents $4$ and $\frac{4}{3}$ to the last relation; this gives
\begin{equation*}
    \frac{1}{(d-c)^3}\leq \int_c^d \rho'^4(f_0\circ \gamma(s,\delta))\Big(\frac{\big|Z(f_0)_I(\gamma(s,\delta))\big|-\big|\overline{Z}(f_0)_I(\gamma(s,\delta))\big|}{2(f_0)_2(\gamma(s,\delta))}\Big)^4|\dot\gamma(s,\delta)|^4_H\,ds.
\end{equation*}
Now, multiplying both sides by $K^2_{f_0}(\delta)$ and integrating over $\Delta$ with respect to $d\nu$ gives
\begin{align}\label{rel K_f0 1}
    &\frac{1}{(d-c)^3}\int_\Delta K^2_{f_0}(\delta)\,d\nu(\delta)\leq\\\nonumber&\int_\Delta \int_c^d \rho'^4(f_0\circ \gamma(s,\delta))K^2_{f_0}(\delta)\left(\frac{\big|Z(f_0)_I(\gamma(s,\delta))\big|-\big|\overline{Z}(f_0)_I(\gamma(s,\delta))\big|}{2(f_0)_2(\gamma(s,\delta))}\right)^4|\dot\gamma(s,\delta)|^4_H\,ds\,d\nu(\delta).
\end{align}
By plugging in the assumption $K_{f_0}(\delta)\equiv K(\gamma(s,\delta),f_0)$ for all $(s,\delta)\in(c,d)\times\Delta$ into \eqref{Volume derivative}, we obtain the identity  
$$
K^2(\gamma(s,\delta),f_0)\left(\frac{\big|Z(f_0)_I(\gamma(s,\delta))\big|-\big|\overline{Z}(f_0)_I(\gamma(s,\delta))\big|}{2(f_0)_2(\gamma(s,\delta))}\right)^4=\mathcal{J}_{\mu_\Aa}(\gamma(s,\delta),f_0).
$$ 
By recomposing the left-invariant Haar measure on $\Aa$ through the foliation $\gamma$ as 
$$
|\dot\gamma(s,\delta)|^4_H\,ds\,d\nu(\delta)=d\mu_\Aa(p),\;p=\gamma(s,\delta)\in\Omega,
$$ we obtain that \eqref{rel K_f0 1} results into
\begin{equation}\label{ rel K_g 2}
    \frac{1}{(d-c)^3}\int_\Delta K^2_{f_0}(\delta)\,d\nu(\delta)\leq \int_\Omega \rho'^4(f_0(p))\mathcal{J}_{\mu_\Aa}(p,f_0)\,d\mu_{\Aa}(p).
\end{equation} 
We then change of variable $f_0(p)=q\in\Omega'$; then \eqref{ rel K_g 2} becomes 
\begin{equation*}
    \frac{1}{(d-c)^3}\int_\Delta K^2_{f_0}(\delta)\,d\nu(\delta)\leq \int_{\Omega'}\rho'^4(q)\,d\mu_{\Aa}(q).
\end{equation*}
Since $\rho'$ was chosen arbitrarily among the admissible densities of $f_0(\Gamma_0)$, the latter inequality shows that
\begin{equation*}
    \frac{1}{(d-c)^3}\int_\Delta K^2_{f_0}(\delta)\,d\nu(\delta)\leq \Mod_4(f_0(\Gamma_0)).
\end{equation*}
For the other inequality consider the push-forward density given by 
\begin{equation*}
    \rho'_0(q)=\left\{\begin{matrix}
    \frac{2(f_0)_2(\gamma(s,\delta))}{(d-c)|\dot\gamma(s,\delta)|_H\left(|Z(f_0)_I(\gamma(s,\delta))|-|\overline Z (f_0)_I(\gamma(s,\delta))|\right)},& & q=f_0(\gamma(s,\delta))\in\Omega',\\
    \\
    0, & &q\notin\Omega'.
  \end{matrix}\right.
\end{equation*}
Thanks to the minimal stretching property of $f_0$ this density is admissible, i.e. $\rho'_0\in\text{Adm}(f_0(\Gamma_0))$:
\begin{align*}
    \int_{f_0\circ\gamma} \rho'_0\,d\ell=\int_c^d\rho_0'(f_0\circ\gamma(s,\delta))|(f_0\circ\gamma)^\cdot(s,\delta)|_H\,ds=\int_c^d\frac{1}{d-c}\,ds=1.
\end{align*} 
Therefore, via the change of variable $f_0(\gamma(s,\delta))=q\in\Omega'$ for some $(s,\delta)\in(c,d)\times \Delta$, we obtain:
\begin{align*}
M_4(f_0(\Gamma_0))\leq&\int_{\Omega'}\rho'^{\,4}_0(q)\,d\mu_{\Aa}(q)\\
                =&\int_\Omega \rho'^{\,4}_0(f_0(p))\mathcal{J}_{\mu_\Aa}(p,f_0)\,d\mu_{\Aa}(p)\\
                =&\int_\Omega \rho'^{\,4}_0(f_0(p))\frac{1}{(2(f_0)_2(p))^4}\big(|Z(f_0)_I|^2-|\overline Z (f_0)_I|^2\big)^2(p)\,d\mu_{\Aa}(p)\\
                =&\int_\Delta\int_c^d\frac{1}{(d-c)^4|\dot\gamma(s,\delta)|_H^4}K^2(\gamma(s,\delta),f_0)|\dot\gamma(s,\delta)|_H^4\,ds\,d\nu(\delta)\\
                =&\frac{1}{(d-c)^4}\int_\Delta\int_c^d K^2(\gamma(s,\delta),f_0)\,ds\,d\nu(\delta)\\
                =&\frac{1}{(d-c)^3}\int_\Delta K^2_{f_0}(\delta)\,d\nu(\delta).
\end{align*}
In this way the proof is concluded.
\end{proof}
\noindent{\textit{Proof of Thereom \ref{Thm1}}}
Proposition \ref{P Mod4 f Gamma_0}, combined with Proposition \ref{quasi-invariance for Mod_4} prove the statement of Theorem \ref{Thm1}.\qed
\begin{rem}\label{strategy}
    From the statement of Theorem \ref{Thm1} we can develop a method which verifies if a candidate quasiconformal map $f_0:\Omega\to\Omega'$ between domains $\Omega,\Omega'\subset\Aa$, is a minimizer for the mean distortion functional. We describe the steps of method:
    \begin{enumerate}[1.]
        \item let $\mathcal{F}$ be a class of quasiconformal mappings $f:\Omega\to\Omega',\,f\in\mathcal{F}$;
        \item let $\gamma$ be a foliation for $\Omega$ which is composed of horizontal curves decomposing the volume measure of $\Aa$, i.e.  $\gamma$ verifies the assumptions of Proposition \ref{P Mod_4 Gamma_0};
        \item introduce the curve family $\Gamma_0$ and then calculate the extremal density $\rho_0$ for $\Mod_4(\Gamma_0)$ given by \eqref{rho_0 strategy};
        \item verify that the distortion quotient $K(\cdot,f)$ is constant along such horizontal curves foliating $\Omega$, i.e. condition \eqref{K const along fol - strategy};
        \item check the MSP for $f_0$ with respect to $\Gamma_0$;
        \item determine a curve family $\Gamma\supset\Gamma_0$ such that $\Mod_4(f_0(\Gamma_0))\leq\Mod_4(f(\Gamma))$ for all $f\in\mathcal{F}$ and verify $\rho_0\in\Adm(\Gamma)$.
    \end{enumerate}
\end{rem}


\section{The linear stretch map}\label{linear stretch}
For $k>0$, the map $f_k:\Aa\to\Aa$ given by 
\begin{equation}\label{f_k exmp}
    f_k(a,\lambda+it)=(ka,\lambda+ikt),
\end{equation}
shall be called \textit{linear stretch map}. Its name is justified by the fact that it is a linear map with respect to the cartesian coordinates.\\
We will present two geometric settings where the linear stretch map turns to be a minimizer for the mean distortion functional, respectively for $k\in(0,1)$ and for $k>1$. This distinction is motivated by the Beltrami coefficient $\mu_{f_k}=\frac{1-k}{1+k}$: the two geometric settings have distinct suitable domains, foliations and associated curve families such that the MSP for $f_k$ holds for both cases. 
\subsection{The case $k\in(0,1)$.}
    Let $k\in(0,1)$ and define two domains as follows:
    \begin{align*}
        \Omega=&\left\{\left(a+\frac{t}{2\lambda},\lambda+it\right)\in\Aa:a\in(0,1),\lambda\in\left(\frac{1}{2},1\right),t\in(0,1)\right\},\\
        \Omega^k=&\left\{\left(k\left(a+\frac{t}{2\lambda}\right),\lambda+ikt\right)\in\Aa:a\in(0,1),\lambda\in\left(\frac{1}{2},1\right),t\in(0,1)\right\}.
    \end{align*}
For a fixed $t\in\R$, we denote 
$$
\partial\Omega_t=\left\{\left(a+\frac{t}{2\lambda},\lambda+it\right)\in\Aa:a\in(0,1),\lambda\in\left(\frac{1}{2},1\right)\right\}
$$
and consider the class $\mathcal{F}_k$ of all quasiconformal mappings $f:\Omega\to{\Omega^k}$ which extend homeomorphically to the boundary and we impose conditions
$$
f(\partial\Omega_0)=\partial\Omega^k_0
\text{ and }
f(\partial\Omega_1)=\partial\Omega^k_1.
$$
The domain $\Omega$ is displayed in the following figure: 
\begin{figure}[ht!]
\centering
\includegraphics[width=1\textwidth]{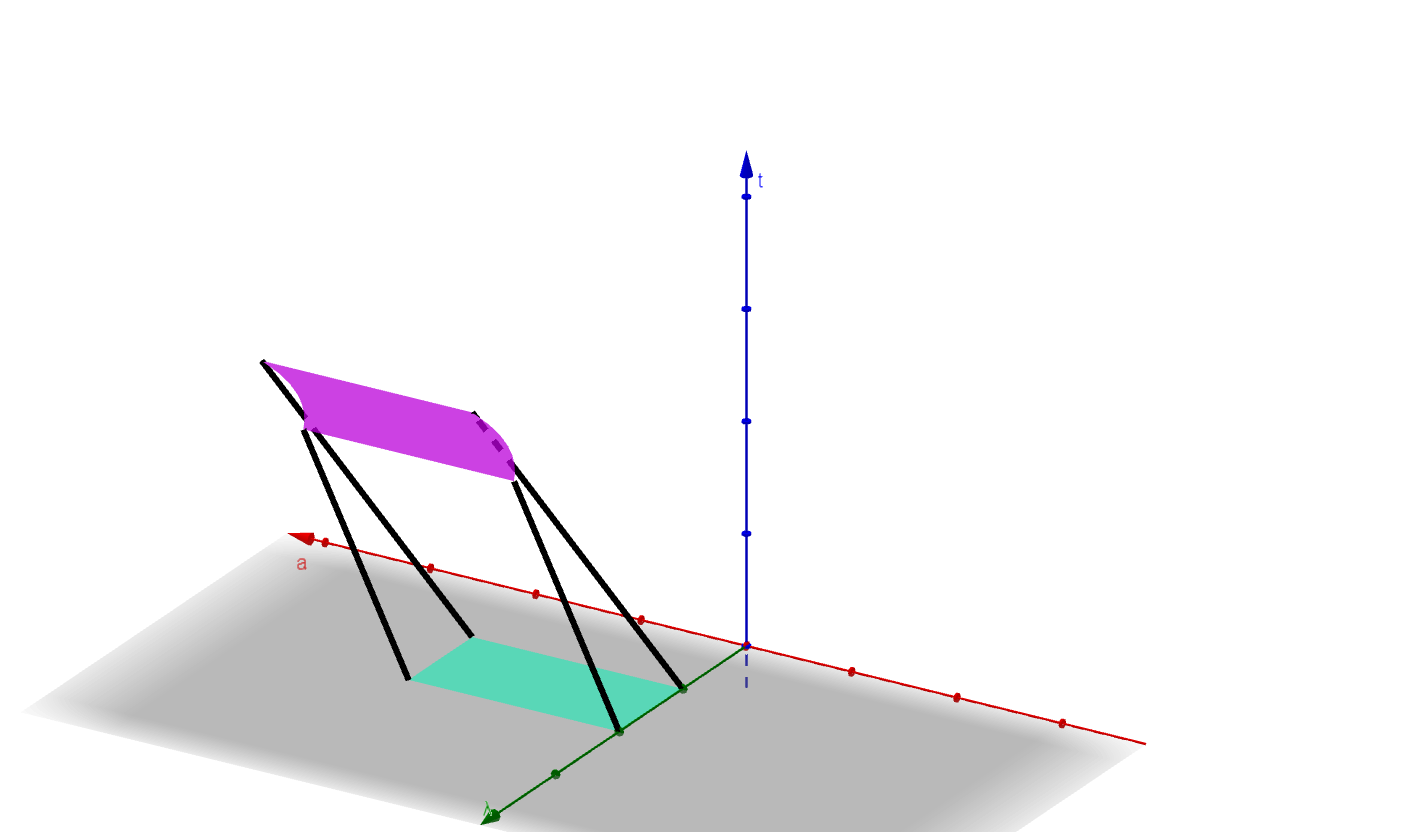}
\caption{$\partial\Omega_0$ is in cyan and $\partial\Omega_1$ is in purple.}
\end{figure}

\newpage
\noindent{\textit{Proof of Theorem \ref{Thm1.5}}}
The steps of the proof steps are the ones explained in Remark \ref{strategy}.\\
\noindent{1. } The class $\mathcal{F}_k$ is presented above the proof.\\
\noindent{2. }Let the pair $(a,\lambda)\in(0,1)\times\left(\frac{1}{2},1\right)$ and let  $\gamma:(0,1)\times(0,1)\times\left(\frac{1}{2},1\right)\to \Omega$ be the foliation of $\Omega$ given by
$$
\gamma(s,a,\lambda)=\left(a+\frac{s}{2\lambda},\lambda+is\right),\quad (s,a,\lambda)\in(0,1)\times(0,1)\times\left(\frac{1}{2},1\right).
$$
In this way, the volume element on $\Aa$ can be written as
$$
d\mu_{\Aa}(\gamma(s,a,\lambda))=\frac{1}{\lambda^2}\,da\,d\lambda\,ds=|\dot \gamma(s,a,\lambda)|^4_H \,ds\,d\nu(a,\lambda),
$$
where
$$
d\nu(a,\lambda)=2^4 \lambda^2 \,da\,d\lambda.
$$
\noindent{3. }In order to apply Proposition \ref{P Mod_4 Gamma_0}, we consider the family of horizontal curves 
$$
\Gamma_0=\left\{\gamma(\cdot,a,\lambda):a\in(0,1),\lambda\in\left(0,\frac{1}{2}\right)\right\}.
$$
An extremal density for $\Gamma_0$ is given by formula \eqref{rho_0}: namely, $\rho_0(a,\lambda+it)=2\lambda\cdot\mathcal{X}_{\Omega}(a,\lambda+it)$. Hence
$$\Mod_4(\Gamma_0)=\int_\frac{1}{2}^1\int_0^1 2^4 \lambda^2\,da\,d\lambda=\frac{14}{3}.$$
\noindent{4. }An explicit calculation gives constant distortion quotient $K_{f_k}$. Indeed,
$$
K(\gamma(s,a,\lambda),f_k)\equiv\frac{1}{k},\quad(s,a,\lambda)\in(0,1)\times(0,1)\times\left(\frac{1}{2},1\right),
$$
and so $K_{f_k}=\esssup_{p}K(p,f_k)=\frac{1}{k}$.\\
\noindent{5. }We observe that $f_k$ has the MSP with respect to the curve family $\Gamma_0$.    
Indeed, for $k\in(0,1)$ we have
$$
\mu_{f_0}(\gamma_{a,\lambda}(s))\frac{\dot{\overline{(\gamma_{a,\lambda})_I}}(s)}{\dot{(\gamma_{a,\lambda})_I}(s)}=\frac{k-1}{1+k}<0\quad \text{for all } s\in(0,1). $$
\noindent{6. } Aiming to apply Theorem \ref{Thm1}, we need to find a bigger curve family $\Gamma\supseteq\Gamma_0$ for which $\rho_0$ is still admissible and such that $\Mod_4(f_k(\Gamma_0))\leq\Mod_4(f(\Gamma))$ for all $f\in\mathcal{F}_k$. A guess for $\Gamma$ is the family of $all$ horizontal curves contained in $\Omega$ which are joining the two components $\partial\Omega_0$ and $\partial\Omega_1$. The boundary conditions for maps in the class $\mathcal{F}_k$ provide that the image $f_k(\Gamma)$ is going to be a family of the same type in $\Omega^k$. Using the absolute continuity of quasiconformal mappings on almost every curve up to a negligible family of curves with zero $4$-modulus and using the boundary conditions, we may show that 
$$
\Mod_4(f_k(\Gamma_0))\leq\Mod_4(f(\Gamma))\quad \text{for all } f\in\mathcal{F}_k
$$
We have to check that $\rho_0$ is admissible for the extended family $\Gamma$.
Indeed, for a curve $\gamma:[c,d]\to\Aa$ with $\gamma\in\Gamma$, we have
\begin{align*}
    \int_\gamma\rho_0\,d\ell=& \int_c^{d}\sqrt{\dot\lambda(s)^2+\dot t(s)^2}\,ds\geq
    \int_c^{d}\dot t(s)\,ds=1
\end{align*}
Here we have used for the evaluation of the integral the fact that $s\mapsto t(s)$ is an absolutely continuous function and the conditions $\gamma(c)\in \partial\Omega_0$, $\gamma(d)\in \partial\Omega_1$.\\
We conclude that $\rho_0\in\Adm(\Gamma)$ and, from Theorem \ref{Thm1}, it follows that
\begin{equation}\label{fun ineq linear strect 1}
K_{f_k}^2\int_{\Omega}\rho_0(p)^4\,d\mu_\Aa(p)\leq\int_{\Omega}K(p,f)^2\rho_0(p)^4\,d\mu_\Aa(p)\quad \text{for all } f\in\mathcal{F}_k\,.
\end{equation}
The proof is complete. \qed

\medskip

We wish to highlight an important consequence which follows from the last proof: by taking the essential supremum for $K^2(\cdot,f)$ on the r.h.s. of \eqref{fun ineq linear strect 1}, we notice that $f_k$ minimizes also the maximal distortion $K_{f}$ (see \eqref{K_f}). We therefore state:
\begin{cor}\label{minim max dist}
      The linear stretch map $f_k:\Omega\to\Omega^k$ is an orientation preserving quasiconformal map such that
    $$
    \quad K_{f_k}\leq K_f,
    $$
    for all $f\in{\mathcal F}_k$\,.   
\end{cor}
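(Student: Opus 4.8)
The plan is to read the statement directly off inequality \eqref{fun ineq linear strect 1}, which was already established during the proof of Theorem \ref{Thm1.5}. That proof records both that $f_k$ is an orientation preserving quasiconformal map and that
$$
K_{f_k}^2\int_{\Omega}\rho_0(p)^4\,d\mu_\Aa(p)\leq\int_{\Omega}K(p,f)^2\rho_0(p)^4\,d\mu_\Aa(p)
$$
holds for every $f\in\mathcal{F}_k$. The orientation preserving and quasiconformality claims are thus inherited verbatim, and the only work left is to upgrade a weighted (mean) inequality into a pointwise bound for the maximal distortion.

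First I would estimate the integrand on the right-hand side pointwise by its essential supremum. By the very definition $K_f=\esssup_p K(p,f)$, one has $K(p,f)^2\le K_f^2$ for $\mu_\Aa$-almost every $p\in\Omega$, and since $\rho_0^4\ge 0$ this gives
$$
\int_{\Omega}K(p,f)^2\rho_0(p)^4\,d\mu_\Aa(p)\le K_f^2\int_{\Omega}\rho_0(p)^4\,d\mu_\Aa(p).
$$
Chaining this with \eqref{fun ineq linear strect 1} produces
$$
K_{f_k}^2\int_{\Omega}\rho_0(p)^4\,d\mu_\Aa(p)\le K_f^2\int_{\Omega}\rho_0(p)^4\,d\mu_\Aa(p).
$$

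Next I would divide through by the common weight $\int_{\Omega}\rho_0^4\,d\mu_\Aa$. This cancellation is legitimate precisely because Proposition \ref{P Mod_4 Gamma_0} identifies that integral with $\Mod_4(\Gamma_0)$, which in the present setting was computed in the proof of Theorem \ref{Thm1.5} to equal $\tfrac{14}{3}$; in particular it is finite and strictly positive. Cancelling it leaves $K_{f_k}^2\le K_f^2$, and extracting square roots (both distortions being at least $1$) yields the asserted $K_{f_k}\le K_f$ for all $f\in\mathcal{F}_k$.

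The main, and essentially only, subtlety is guaranteeing that the weight is neither $0$ nor $+\infty$, so that the division preserves the inequality; this is immediate from the explicit value $\Mod_4(\Gamma_0)=\tfrac{14}{3}$ obtained earlier. Because no further estimate is needed once \eqref{fun ineq linear strect 1} is in hand, the result is naturally phrased as a corollary rather than a standalone theorem.
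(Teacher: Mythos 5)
Your proposal is correct and follows essentially the same route as the paper: the paper derives the corollary precisely by taking the essential supremum of $K^2(\cdot,f)$ on the right-hand side of \eqref{fun ineq linear strect 1} and cancelling the common weight $\int_\Omega\rho_0^4\,d\mu_\Aa$. Your additional remark that this weight equals $\Mod_4(\Gamma_0)=\tfrac{14}{3}$, hence is finite and strictly positive, is a sensible explicit justification of a step the paper leaves implicit.
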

\subsection{The case $k>1$.}
 Let $k>1$ and by using a similar notation as in the previous section we consider two domains as follows:
    \begin{align*}
        \Omega=&\left\{\left(a,\lambda+it\right)\in\Aa:a\in(0,1),\lambda\in\left(\frac{1}{2},1\right),t\in(0,1)\right\},\\
        \Omega^k=&\left\{\left(ka,\lambda+ikt\right)\in\Aa:a\in(0,1),\lambda\in\left(\frac{1}{2},1\right),t\in(0,1)\right\}.
    \end{align*}
For a fixed $\lambda>0$, let
$$
\partial\Omega_\lambda=\{\left(a,\lambda+it\right)\in\Aa:a\in(0,1),t\in\left(0,1\right)\}
$$
and consider the class $\mathcal{F}_k$ of all quasiconformal mappings $f:\Omega\to{\Omega^k}$ which extend homeomorphically to the boundary subject to the conditions
$$
f(\partial\Omega_{\frac{1}{2}})=\partial\Omega^k_\frac{1}{2}
\text{ and }
f(\partial\Omega_1)=\partial\Omega^k_1.
$$
\noindent{\textit{Proof of Theorem \ref{Thm1.5}}}
Again, the steps of the proof follow the strategy of Remark \ref{strategy}.\\
\noindent{1. } The class $\mathcal{F}_k$ is as above.\\
\noindent{2. } Let the pair $(a,t)\in(0,1)\times\left(0,1\right)$ and let  $\gamma:\left(\frac{3}{2^{\frac{1}{3}}},3\right)\times(0,1)\times(0,1)\to \Omega$ be the foliation of $\Omega$ given by
$$
\gamma(s,a,t)=\left(a,\frac{s^3}{3^3}+it\right),\quad (s,a,t)\in\left(\frac{3}{2^{\frac{1}{3}}},3\right)\times(0,1)\times(0,1).
$$
In this way, the volume element on $\Aa$ can be written as
$$
d\mu_{\Aa}(\gamma(s,a,t))=\frac{3^4}{s^4}\,da\,ds\,dt=|\dot \gamma(s,a,t)|^4_H \,ds\,d\nu(a,t),
$$
where
$$
d\nu(a,t)=2^4 \,da\,dt.
$$
\noindent{3. } In order to apply Proposition \ref{P Mod_4 Gamma_0}, we consider the family of horizontal curves 
$$
\Gamma_0=\{\gamma(\cdot,a,t):a\in(0,1),t\in\left(0,1\right)\}.
$$
The extremal density $\rho_0$ for $\Gamma_0$ following from formula \eqref{rho_0}, is given by 
$$
\rho_0(a,\lambda+it)=c_0\lambda^\frac{1}{3}\cdot\mathcal{X}_{\Omega}(a,\lambda+it),
$$ 
where $c_0=\frac{2^\frac{4}{3}}{3\left(2^\frac{1}{3}-1\right)}$.
This results into
$$\Mod_4(\Gamma_0)=\frac{1}{\left(3-\frac{3}{2^\frac{1}{3}}\right)^3}\int_0^1\int_0^1 2^4\,da\,d\lambda=\frac{2^5}{3^3 \left(2^{\frac{1}{3}}-1\right)}.$$
\noindent{4. }An explicit calculation gives constant distortion quotient $K_{f_k}$, indeed
$$
K(\gamma(s,a,t),f_k)\equiv k,\quad(s,a,t)\in\left(\frac{3}{2^{\frac{1}{3}}},3\right)\times(0,1)\times(0,1),
$$ 
and so $K_{f_k}=\esssup_{p}K(p,f_k)=k$.\\
\noindent{5. } We observe that $f_k$ has the MSP with respect to the curve family $\Gamma_0$.    
Indeed, for $k>1$ we have
$$
\mu_{f_0}(\gamma_{a,t}(s))\frac{\dot{\overline{(\gamma_{a,t})_I}}(s)}{\dot{(\gamma_{a,t})_I}(s)}=\frac{1-k}{1+k}<0,\quad \text{for all } s\in\left(\frac{3}{2^{\frac{1}{3}}},3\right).
$$ 
\noindent{6. } Now, in order to apply Theorem \ref{Thm1}, we need to find a bigger curve family $\Gamma\supseteq\Gamma_0$ for which $\rho_0$ is still admissible and such that $\Mod_4(f_k(\Gamma_0))\leq\Mod_4(f(\Gamma))$ for all $f\in\mathcal{F}_k$. As in the previous proof, a guess for $\Gamma$ is the family of $all$ horizontal curves contained in $\Omega$ which are joining the two components $\partial\Omega_\frac{1}{2}$ and $\partial\Omega_1$. Using similar arguments as in the previous proof we have
$$
\Mod_4(f_k(\Gamma_0))\leq\Mod_4(f(\Gamma))\quad \text{for all } f\in\mathcal{F}_k.
$$
We have to check that $\rho_0$ is admissible for the extended family $\Gamma$.
Indeed, for a curve $\gamma:[c,d]\to\Aa$ with $\gamma\in\Gamma$, we have
\begin{align*}
    \int_\gamma\rho_0\,d\ell=& c_0\int_c^{d}\lambda(s)^\frac{1}{3}\frac{\sqrt{\dot\lambda(s)^2+\dot t(s)^2}}{2\lambda(s)}\,ds\\
    \geq&
    \frac{c_0}{2}\int_c^{d}\frac{\dot \lambda(s)}{\lambda(s)^\frac{2}{3}}\,ds=\frac{3c_0}{2}\left(\lambda(d)^\frac{1}{3}-\lambda(c)^\frac{1}{3}\right)=1.
\end{align*}
Here we have used for the evaluation of the integral the fact that $s\mapsto \lambda(s)$ is an absolutely continuous function and the conditions $\gamma(c)\in \partial\Omega_{\frac{1}{2}}$, $\gamma(d)\in \partial\Omega_1$.\\
We conclude that $\rho_0\in\Adm(\Gamma)$ and, from Theorem \ref{Thm1}, it follows that
$$
K_{f_k}^2\int_{\Omega}\rho_0(p)^4\,d\mu_\Aa(p)\leq\int_{\Omega}K(p,f)^2\rho_0(p)^4\,d\mu_\Aa(p)\quad \text{for all } f\in\mathcal{F}_k\,.
$$
The proof is complete.\qed

\medskip

In the same manner as in Corollary \ref{minim max dist}, we obtain
\begin{cor}
 The linear stretch map $f_k:\Omega\to\Omega^k$ is an orientation preserving quasiconformal map such that
    $$
    \quad K_{f_k}\leq K_f,
    $$
    for all $f\in{\mathcal F}_k$\,.   
\end{cor}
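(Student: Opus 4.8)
The plan is to derive $K_{f_k}\le K_f$ directly from the mean-distortion inequality already established for the $k>1$ setting in Theorem \ref{Thm1.5}, exactly as the case $k\in(0,1)$ was handled in Corollary \ref{minim max dist}. The quasiconformality and the orientation-preserving character of $f_k$ require no separate argument, since they are part of the conclusion of Theorem \ref{Thm1.5}; the only thing left is the comparison of maximal distortions.

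First I would recall, with the foliation $\gamma$ and the extremal density $\rho_0$ constructed for the domains $\Omega,\Omega^k$ of the $k>1$ case, the inequality
$$
K_{f_k}^2\int_{\Omega}\rho_0(p)^4\,d\mu_\Aa(p)\leq\int_{\Omega}K(p,f)^2\rho_0(p)^4\,d\mu_\Aa(p),\qquad f\in\mathcal{F}_k,
$$
which is precisely the content of Theorem \ref{Thm1.5}. Next I would estimate the right-hand side from above by the essential supremum: since $K_f=\esssup_p K(p,f)$ and the weight $\rho_0^4\,d\mu_\Aa$ is nonnegative, one has
$$
\int_{\Omega}K(p,f)^2\rho_0(p)^4\,d\mu_\Aa(p)\leq K_f^2\int_{\Omega}\rho_0(p)^4\,d\mu_\Aa(p).
$$
Chaining the two displays gives $K_{f_k}^2\int_{\Omega}\rho_0^4\,d\mu_\Aa\leq K_f^2\int_{\Omega}\rho_0^4\,d\mu_\Aa$.

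Finally I would cancel the common factor $\int_{\Omega}\rho_0^4\,d\mu_\Aa=\Mod_4(\Gamma_0)=\frac{2^5}{3^3\left(2^{1/3}-1\right)}$; by Proposition \ref{P Mod_4 Gamma_0} this quantity is finite and strictly positive, so the division is legitimate and yields $K_{f_k}^2\leq K_f^2$, hence $K_{f_k}\leq K_f$ for every $f\in\mathcal{F}_k$. I do not expect any genuine obstacle here: all the analytic work sits in Theorem \ref{Thm1.5}, and the only point deserving a moment of attention is the strict positivity and finiteness of $\Mod_4(\Gamma_0)$, which is what makes the passage from the integrated inequality to the pointwise bound on $K_{f_k}$ valid.
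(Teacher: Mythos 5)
Your proposal is correct and follows essentially the same route as the paper: the paper's own argument for this corollary is exactly to take the essential supremum of $K^2(\cdot,f)$ on the right-hand side of the mean-distortion inequality from Theorem \ref{Thm1.5} and then cancel the positive finite factor $\int_\Omega\rho_0^4\,d\mu_{\Aa}=\Mod_4(\Gamma_0)$, just as was done in Corollary \ref{minim max dist} for the case $k\in(0,1)$. Your additional remark on the strict positivity and finiteness of $\Mod_4(\Gamma_0)$ is the right point to flag, and it is guaranteed by Proposition \ref{P Mod_4 Gamma_0}.
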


This case may be viewed as a solution to the Gr\"otzsch problem on the setting of the affine-additive group (see also \cite{Ahl54} and \cite{G} for the classical Gr\"otzsch problem on the complex plane, as well as Section 5.2 in \cite{BFP-H-MM} for the analogous Gr\"otzsch problem on the Heisenberg group). 


\section{Cylindrical-logarithmic coordinates}\label{Sec4}
In order to construct radial stretch maps it is convenient to set up an appropriate type of coordinate system on the affine-additive group.  
To this direction, a first step is to consider cylindrical coordinates: recall that $\Aa$ identifies to $\R\times\mathbf{H}^1_\C$, hence the coordinate map for cylindrical coordinates is given by $\mathcal{C}:\R\times\R_{>0}\times\left(-\frac{\pi}{2},\frac{\pi}{2}\right)\to\Aa$ where
$$
\mathcal{C}(a,r,\psi)=(a,r e^{i\psi}), \quad (a,r,\psi)\in\R\times\R_{>0}\times\left(-\frac{\pi}{2},\frac{\pi}{2}\right).
$$
By applying the transformation $\xi\mapsto e^\xi=r>0$, with $\xi\in\R$, the cylidrical-logarithmic coordinates are defined as $\Phi:\R\times\R\times\left(-\frac{\pi}{2},\frac{\pi}{2}\right)\to\Aa$ where
\begin{equation}\label{log coords}
    \Phi(a,\xi,\psi)=\left(a,e^{\xi+i\psi}\right).
\end{equation}
Moreover, the inverse map for this new type of coordinates is explicitly given by
$$
\Phi^{-1}(a,\lambda+it)=\left(a,\frac{\log(\lambda^2+t^2)}{2},\tan^{-1}\left(\frac{t}{\lambda}\right)\right)\in\R\times\R\times\left(-\frac{\pi}{2},\frac{\pi}{2}\right),\quad(a,\lambda+it)\in\Aa.
$$
On the domain 
\begin{equation*}
    \mathbf A:=\R\times\R\times\left(-\frac{\pi}{2},\frac{\pi}{2}\right),
\end{equation*}
the map $\Phi:\mathbf{A}\to\Aa$ is a smooth diffeomorphism with corresponding Jacobian determinant
\begin{equation}\label{Jac det}
    (\det \Phi_*)_{(a,\xi,\psi)}=e^{2\xi}\neq0.
\end{equation}
It follows that for each curve $\gamma : [c, d] \to\Aa$
and
each point $(a, \xi, \psi)= \Phi^{-1}({\gamma(c)})$, there exists a unique curve $\tilde\gamma=\Phi^{-1}\circ\gamma : [c, d] \to\mathbf{A}$
such that $\tilde\gamma(c) = (a, \xi, \psi)$. If $\gamma$ is absolutely continuous in the Euclidean sense, or if it is $C^k$
for a $k \in \N_0$, then $\tilde \gamma$ will be as much regular as $\gamma$.
Further, the same reasoning applies also for continuous mappings from simply connected domains
in $\Aa$. In detail, every mapping $\tilde f:\mathbf{A}\to\mathbf{A}$ yields a well-defined map $f:\Aa\to\Aa$ by setting $f=\Phi\circ\tilde f\circ\Phi^{-1}$.

In what follows, we are going to use only cylindrical-logarithmic coordinates: we will define a quasiconformal map $f$ between domains in the affine-additive group by giving a formula for $\tilde f$. On the other hand, we will still work with $\tilde f$ in the case where this is convenient.\\\\
It turns out that the stretch map has a much neater form in cylindrical-logarithmic coordinates.
\\ 
In what follows we shall give expression for:
\begin{itemize}
    \item the contact condition;
  \item the horizontal vector fields;
  \item the volume and curve integrals;
  \item the Beltrami coefficient;
  \item the MSP condition, 
\end{itemize} 
in these particular coordinates. We adopt the notation 
$$
\tilde f(a,\xi,\psi)=\left(A(a,\xi,\psi),\Xi(a,\xi,\psi),\Psi(a,\xi,\psi)\right).
$$
Also, if $\eta$  is an index running through $a, \xi$ and  $\psi$, we will write 
$A_\eta=\frac{\partial A}{\partial \eta}$ for a given differentiable function $A$.
\subsection{Horizontality, contact condition and the minimal stretching property.}
In order to apply the modulus method, we will need to understand how the horizontality condition transfers on curves in terms of cylindrical-logarithmic coordinates. The following formulas of horizontality and of line integration for curves in $\mathbf{A}$ are useful.
\begin{prop}\label{prop horiz curve}
    A curve $\gamma:[c,d]\to\Aa$ is horizontal if and only if there exists
an absolutely continuous curve 
$$
\tilde\gamma:[c,d]\to\mathbf{A},\quad \tilde\gamma(s)=(a(s),\xi(s),\psi(s)),
$$
with $\Phi\circ\tilde\gamma=\gamma$ and 
\begin{equation}\label{horiz cond}
    \frac{\dot\psi(s)}{2}+\frac{\tan\psi(s)}{2}\dot\xi(s)-\dot a(s)=0 \quad \text{for almost every } s\in[c,d].
\end{equation}
Moreover, for any Borel function $\rho:\Aa\to[0,+\infty]$, we have
\begin{equation}\label{density change}
    \int_\gamma \rho\,d\ell=\int_c^d\rho(\Phi(\tilde\gamma(s)))\frac{\sqrt{\dot\xi(s)^2+\dot\psi(s)^2}}{2\cos\psi(s)}\,ds\,.
\end{equation}
\end{prop}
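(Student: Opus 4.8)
The plan is to reduce everything to the explicit coordinate relations encoded in $\Phi$. Writing $\Phi(a,\xi,\psi)=(a,e^{\xi+i\psi})$ and separating real and imaginary parts gives $\lambda=e^\xi\cos\psi$ and $t=e^\xi\sin\psi$, while the first coordinate $a$ is left untouched. Since by \eqref{Jac det} the map $\Phi:\mathbf{A}\to\Aa$ is a smooth diffeomorphism with nowhere-vanishing Jacobian, composition with $\Phi$ and $\Phi^{-1}$ preserves absolute continuity; hence a curve $\gamma$ is absolutely continuous exactly when $\tilde\gamma=\Phi^{-1}\circ\gamma$ is, and it suffices to rewrite the two defining conditions in the new coordinates at points of differentiability.

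First I would establish the horizontality equivalence \eqref{horiz cond}. Differentiating the relations above along $\tilde\gamma$ by the chain rule gives $\dot\lambda=e^\xi(\dot\xi\cos\psi-\dot\psi\sin\psi)$ and $\dot t=e^\xi(\dot\xi\sin\psi+\dot\psi\cos\psi)$ for almost every $s$. Dividing, the combination appearing in the o.d.e. \eqref{horiz ode} simplifies to $\frac{\dot t}{2\lambda}=\frac{\dot\xi\sin\psi+\dot\psi\cos\psi}{2\cos\psi}=\frac{\tan\psi}{2}\dot\xi+\frac{\dot\psi}{2}$, where I use $\cos\psi\neq0$ on $\left(-\frac{\pi}{2},\frac{\pi}{2}\right)$. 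Substituting into \eqref{horiz ode} and recalling that $a$ is unchanged yields exactly \eqref{horiz cond}; the equivalence is read off in both directions since each step is reversible almost everywhere.

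For the line integral \eqref{density change} I would compute the horizontal velocity in the new coordinates. A direct expansion, in which the cross terms cancel by the Pythagorean identity, gives $\dot\lambda^2+\dot t^2=e^{2\xi}(\dot\xi^2+\dot\psi^2)$, so that $\sqrt{\dot\lambda^2+\dot t^2}=e^\xi\sqrt{\dot\xi^2+\dot\psi^2}$. Inserting this together with $2\lambda=2e^\xi\cos\psi$ into the speed formula \eqref{horiz speed} produces $|\dot\gamma(s)|_H=\frac{\sqrt{\dot\xi(s)^2+\dot\psi(s)^2}}{2\cos\psi(s)}$. The formula \eqref{density change} then follows immediately from the definition $\int_\gamma\rho\,d\ell=\int_c^d\rho(\gamma(s))|\dot\gamma(s)|_H\,ds$ after writing $\gamma=\Phi\circ\tilde\gamma$.

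The computations themselves are routine; the only point that needs care is the regularity bookkeeping, namely ensuring that the passage between $\gamma$ and $\tilde\gamma$ genuinely preserves absolute continuity and that the chain-rule identities hold almost everywhere rather than merely formally. This is precisely where I would invoke that $\Phi$ is a diffeomorphism with smooth inverse and locally bounded derivatives, so that null sets and almost-everywhere differentiability are transported faithfully in both directions.
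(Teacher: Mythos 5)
Your proof is correct and follows essentially the same route as the paper: transport absolute continuity through the diffeomorphism $\Phi$, translate the horizontality o.d.e.\ \eqref{horiz ode} into the coordinates $(a,\xi,\psi)$, and compute $|\dot\gamma|_H=\frac{\sqrt{\dot\xi^2+\dot\psi^2}}{2\cos\psi}$ to get \eqref{density change}. The only cosmetic difference is that you carry out the chain-rule computation directly along curves, whereas the paper phrases the same calculation as the coordinate expression \eqref{contact form} of the pulled-back contact form.
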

\begin{proof}
 If $\tilde\gamma:[c,d]\to\mathbf{A}$ is an absolutely continuous curve satisfying $\eqref{horiz cond}$, then
we consider the absolutely continuous curve $\gamma:=\Phi\circ\tilde\gamma$. Conversely, if $\gamma:[c,d] \to\Aa$
is horizontal, we take $\tilde\gamma:[c,d]\to \mathbf{A}$ to be  $\tilde \gamma=\Phi^{-1}\circ\gamma$.\\ 
Now, consider two almost everywhere differentiable
curves $\gamma:[c,d]\to\Aa$ and $\tilde\gamma:[c,d]\to\mathbf{A}$ such that $\Phi \circ \tilde\gamma = \gamma$
for all $s \in [c,d]$. Let $s$ be a point of differentiability in $[c,d]$.
There exists a neighborhood of $s$ where we also have $\Phi \circ \tilde\gamma = \gamma$. Knowing that the local expression of the contact form corresponds to $\eqref{contact form}$, it follows that the condition for a horizontal curve reads as $\eqref{horiz cond}$.
Then we obtain:
\begin{align*}
    |\dot\gamma(s)|_H&=
    \frac{\sqrt{\dot\xi(s)^2+\dot\psi(s)^2}}{2\cos\psi(s)}.
\end{align*}
For such a horizontal curve $\gamma:[c,d]\to\Aa$, the formula for the curve integral follows immediately since 
$\int_\gamma\rho\,d\ell=\int_c^d \rho(\gamma(s))|\dot\gamma(s)|_H\,ds$.
\end{proof}

Now, we are going to describe the contact form and the contact conditions with respect to the cylindrical-logarithmic coordinates. The cartesian coordinates on $\Aa$ can be defined through the diffeomorphism $\Phi$ using coordinates $(a,\xi,\psi)$. The expression of the contact form $\vartheta$ on $\mathbf{A}$ is
\begin{equation}\label{contact form}
    \vartheta=\frac{d\psi}{2}+\frac{\tan\psi}{2}d\xi-da\,.
\end{equation}
\begin{prop}\label{prop tilde contact}
    Let $Q$ be an open set in $\mathbf{A}$ and assume that there exist $C^1$ maps $\tilde f:Q\to\mathbf{A}$ and $f:\Phi(Q)\to\Aa$ such that $f=\Phi\circ\tilde f\circ \Phi^{-1}$ on $Q$. Then the following conditions are equivalent:
    \begin{enumerate}[(1)]
        \item the map $f$ is a contact transformation;
        \item there exists a nowhere vanishing function $\tilde \lambda:Q\to\R$ such that the map $\tilde f=(A,\Xi,\Psi)$ is a $C^1$ diffeomorphism satisfying the system of p.d.e.s  
        \begin{align}\label{contact eqs}
            & \Psi_\psi+\tan\Psi\,\Xi_\psi-2A_\psi=\tilde\lambda  \nonumber\\
            & \Psi_\xi+\tan\Psi\,\Xi_\xi-2A_\xi=\tilde\lambda\tan\psi\\
            & 2A_a-\Psi_a-\tan\Psi\,\Xi_a=2\tilde\lambda\,. \nonumber
        \end{align}
    \end{enumerate}
\end{prop}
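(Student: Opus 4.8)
The plan is to reduce the contact condition on $f$ to an equivalent condition on $\tilde f$ by exploiting the naturality of the pullback under the diffeomorphism $\Phi$, and then to read off the system \eqref{contact eqs} through a direct comparison of coefficients. Since $f=\Phi\circ\tilde f\circ\Phi^{-1}$ on $\Phi(Q)$, we have $f\circ\Phi=\Phi\circ\tilde f$ on $Q$, so the pullbacks satisfy $\Phi^*\circ f^*=\tilde f^*\circ\Phi^*$. Applying both sides to $\vartheta$ gives $\Phi^*(f^*\vartheta)=\tilde f^*(\Phi^*\vartheta)$. Recalling that $\Phi^*\vartheta$ is precisely the one-form $\frac{d\psi}{2}+\frac{\tan\psi}{2}\,d\xi-da$ recorded in \eqref{contact form}, this identity shows that $f^*\vartheta=\sigma\vartheta$ holds on $\Phi(Q)$ if and only if $\tilde f^*(\Phi^*\vartheta)=(\sigma\circ\Phi)\,\Phi^*\vartheta$ holds on $Q$. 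I would thus set $\tilde\lambda:=\sigma\circ\Phi$, noting that $\tilde\lambda$ is nowhere vanishing exactly when $\sigma$ is, because $\Phi$ is a diffeomorphism.

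Next I would compute $\tilde f^*(\Phi^*\vartheta)$ explicitly. Writing $\tilde f=(A,\Xi,\Psi)$, naturality gives
$$\tilde f^*(\Phi^*\vartheta)=\frac{d\Psi}{2}+\frac{\tan\Psi}{2}\,d\Xi-dA.$$
Expanding $dA,\,d\Xi,\,d\Psi$ in the basis $da,\,d\xi,\,d\psi$ and collecting terms produces three scalar coefficients, one for each of $d\psi$, $d\xi$, and $da$. Matching these against the corresponding coefficients of $\tilde\lambda\,\Phi^*\vartheta=\tilde\lambda\left(\frac{d\psi}{2}+\frac{\tan\psi}{2}\,d\xi-da\right)$ and clearing the common factor $2$ yields exactly the three lines of \eqref{contact eqs}, with the multiplier $\tilde\lambda$ occupying the role of $\sigma\circ\Phi$.

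The remaining regularity bookkeeping is straightforward: since $\Phi$ is a smooth diffeomorphism with $\det\Phi_*\neq0$ by \eqref{Jac det}, the map $f$ is a $C^1$ diffeomorphism of $\Phi(Q)$ if and only if $\tilde f$ is a $C^1$ diffeomorphism of $Q$, which matches the diffeomorphism requirement built into "contact transformation" in (1) with the corresponding statement for $\tilde f$ in (2). Both directions then follow from the coefficient comparison: if $f$ is a contact transformation one sets $\tilde\lambda=\sigma\circ\Phi$ and derives \eqref{contact eqs}; conversely, if $\tilde f$ is a $C^1$ diffeomorphism solving \eqref{contact eqs} with nowhere vanishing $\tilde\lambda$, then reassembling the three equations into the one-form identity $\tilde f^*(\Phi^*\vartheta)=\tilde\lambda\,\Phi^*\vartheta$ gives $f^*\vartheta=\sigma\vartheta$ with $\sigma=\tilde\lambda\circ\Phi^{-1}$ nowhere vanishing. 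I expect the only genuine care to be in the bookkeeping rather than in any conceptual step: verifying the naturality identity $\Phi^*f^*=\tilde f^*\Phi^*$ at the $C^1$ level and tracking the factors of $\tfrac12$ and the $\tan$ terms correctly through the expansion, so that the matched coefficients align with the precise normalization in \eqref{contact eqs}. This routine but error-prone coefficient matching is where mistakes are most likely to slip in.
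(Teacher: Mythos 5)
Your proof is correct and follows essentially the same route as the paper: the paper's (much terser) proof likewise reduces the contact condition via the diffeomorphism $\Phi$ to the coordinate expression \eqref{contact form} of $\vartheta$ on $\mathbf{A}$, identifies $\tilde\lambda=\lambda\circ\Phi$, and reads off \eqref{contact eqs} by comparing coefficients. Your write-up just makes explicit the naturality identity $\Phi^*f^*=\tilde f^*\Phi^*$ and the coefficient matching that the paper leaves implicit; both check out, including the factors of $\tfrac12$ and the sign in the $da$ component.
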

\begin{proof}
    Since $f$ and $\tilde f$ are related by $f=\Phi\circ\tilde f\circ \Phi^{-1}$, it is straightfoward to see that $f$ is a $C^1$ diffeomorphism if and only if $\tilde f$ is so.\\
    Now, focusing on the contact conditions, we recall that the map $\Phi$ is a diffeomorphism and that the contact form $\vartheta$ is given by $\eqref{contact form}$. The condition that there exists $\lambda(p)\neq0$ such that $(f^*\vartheta)_p=\lambda(p)\vartheta_p$ is equivalent to $\eqref{contact eqs}$ with $\tilde\lambda=\lambda\circ\Phi$.
\end{proof}

\begin{rem}
    We wish to underline here that a quasiconformal mapping $f$ is differentiable almost everywhere and contact almost everywhere. Thus the corresponding $\tilde f$ satisfies the system of p.d.e.s \eqref{contact eqs} almost everywhere. 
\end{rem}
Below, we give expressions for the vector fields $Z$ and $\overline Z$ in terms of cylindrical-logarithmic coordinates. Straightforward calculations yield
\begin{align}
    Z&=e^{-i\psi}\cos\psi (\partial_\xi-i\partial_\psi)-\frac{i}{2}\partial_a\,,\\
    \overline Z&=e^{i\psi}\cos\psi (\partial_\xi+i\partial_\psi)+\frac{i}{2}\partial_a \,.
    \end{align}
Let $f$ and $\tilde f=(A,\Xi,\Psi)$ be $C^1$ maps as in Proposition \ref{prop tilde contact}. The Beltrami coefficient of $f$ is given by
\begin{equation}\label{Beltrami coeff}
    \mu_f(\Phi(a,\xi,\psi))=\left(\frac{\overline Z(\Xi+i\Psi)}{Z(\Xi+i\Psi)}\right)_{|(a,\xi,\psi)}\,.
\end{equation}
Now assume in addition that $f$ is an orientation preserving quasiconformal map. Let $\tilde\Gamma$ be a family of $C^1$ curves
$$
\tilde\gamma:[a,b]\to\mathbf{A},\quad \tilde\gamma(s)=(a(s),\xi(s),\psi(s))
$$
such that 
\begin{equation*}
    \frac{\dot\psi(s)}{2}+\frac{\tan\psi(s)}{2}\dot\xi(s)-\dot a(s)=0 \quad \text{for all }s\in(a,b),
\end{equation*}
and 
\begin{equation}\label{MSP Phi}
    \frac{\dot\xi(s)-i\dot\psi(s)}{\dot\xi(s)+i\dot\psi(s)}\left(\frac{\overline Z(\Xi+i\Psi)}{Z(\Xi+i\Psi)}\right)_{|\tilde\gamma(s)}<0,
\end{equation}
for $s\in(a,b)$ with $\mu_f(\Phi(\tilde\gamma(s))\neq0.$ Then $f$ has the MSP for the family $\Gamma=\{\Phi\circ\tilde\gamma:\tilde\gamma\in\tilde\Gamma\}$.

\section{The radial stretch map}\label{trunc.td cyl shell}
In this Section we prove Theorem \ref{Thm2}.
In order to construct an analogue of the radial stretch map $z\mapsto |z|^{k-1}z$ in the setting of $\Aa$, we 
detect a suitable domain where this radial stretch map will be defined. This domain happens to be a truncated cylindrical shell, parallel to the $a$-axis of $\Aa$.\\
In detail, for $0<\psi_0<\frac{\pi}{2}\text{ and }r_0>1$ we define
\begin{equation}\label{fol domain} 
D_{r_0,\,\psi_0}=\left\{\left(a+\frac{\tan\psi}{2}\xi,\,e^{\xi+i\psi}\right)\in\Aa:a\in(0,1),\,\psi\in(0,\psi_0),\,\xi\in(0,\log r_0)\right\}.
\end{equation}
Furthermore, we define the following subsets of $\partial D_{r_0,\,\psi_0}$:
 \begin{align}\label{E,F}
     E&=\left\{\left(a, e^{i\psi}\right)\in\Aa:a\in(0,1),\,\psi\in(0,\psi_0)\right\},\\
     F&=\left\{\left(a+\frac{\tan\psi}{2}\log r_0,\,r_0 e^{i\psi}\right)\in\Aa:a\in(0,1),\,\psi\in(0,\psi_0)\right\}.
 \end{align}
 By varying $a\in(0,1)$ and $\psi\in(0,\psi_0)$, we see that $E$ and $F$ are connected by horizontal curves $\gamma_{a,\,\psi}:[0,\log r_0]\to D_{r_0,\psi_0}$ given by
$$
\gamma_{a,\,\psi}(s)=\left(a+\frac{\tan\psi}{2}s,e^{s+i\psi} \right).
$$
Before we proceed, we shall give a volume formula with respect to the logarithmic-cylindrical coordinates and then apply it to the particular case of $D_{r_0,\psi_0}$. 
Let $\Omega\subseteq\Aa$ be a measurable set and let $Q\subseteq\mathbf{A}$ be an open set such that its image $\Phi(Q)=\Omega$. Then a function $h:\Omega\to\R$ is integrable if and only if $(h\circ\Phi)|\det\Phi_*|$ is integrable on $Q$ and in this case we have 
$$
\int_\Omega h(p)\,d\mu_\Aa(p)=\int_Q \frac{h(\Phi(a,\xi,\psi))}{\cos^2\psi}\,d\mathcal{L}^3(a,\xi,\psi)\,.
$$
For every integrable function $h:D_{r_0,\,\psi_0}\to\R$ we have
 $$
 \int_{D_{r_0,\,\psi_0}} h(p)\,d\mu_\Aa(p)=\int_0^{\psi_0}\int_0^{\log r_0}\int_{\frac{\tan\psi}{2}\xi}^{1+\frac{\tan\psi}{2}\xi} \frac{h(\Phi(a,\xi,\psi))}{\cos^2\psi}\,da\,d\xi\,d\psi\,.
 $$
 
$D_{r_0,\psi_0}$ for the case $r_0=e$, $\psi_0=\frac{\pi}{4}$ is in the following figure.
\begin{figure}[ht]
\centering
\includegraphics[width=1\textwidth]{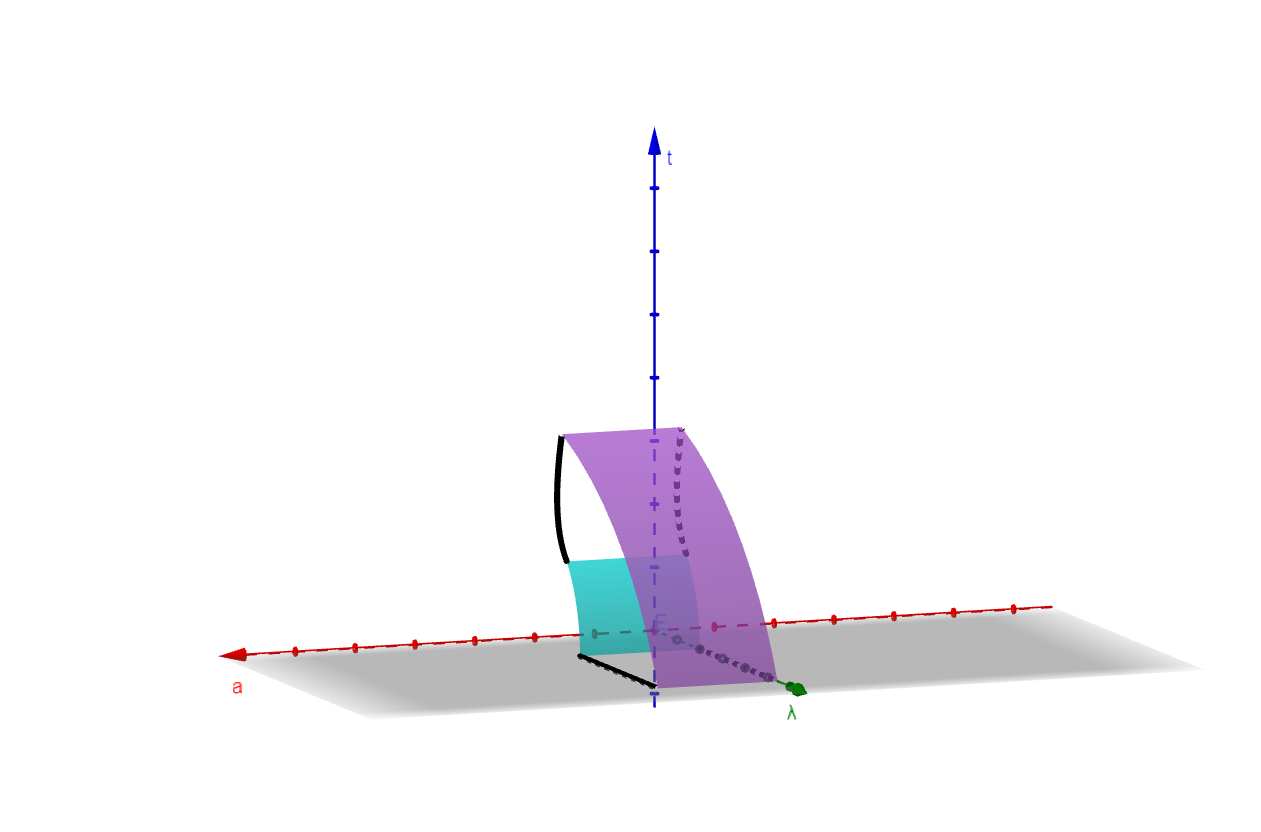}
\caption{Domain $D_{e,\frac{\pi}{4}}$ with $E$ in cyan and $F$ in purple.}
\end{figure}

\subsection{The radial stretch map. Proof of Theorem \ref{Thm2}.}
In this section we construct the radial stretch map on the affine-additive group. We prove Theorem \ref{Thm2} and discuss the properties of the radial stretch map in the remarks.\\
 Let $0<k<1$; 
we start by considering logarithmic-polar coordinates $(\xi,\psi)\in\R\times\left(-\frac{\pi}{2},\frac{\pi}{2}\right)$ on $\textbf{H}^1_\C$ with symplectic form $\omega=\frac{d\xi\wedge d\psi}{4\cos^2\psi}$ and, with respect to the same coordinates, we set the $1$-form $\tau$ on $\textbf{H}^1_\C$ given by $\tau=\frac{d\psi}{2}+\frac{\tan\psi}{2}d\xi$. We introduce the symplectic and planar radial stretch map $g_k:\R\times\left(-\frac{\pi}{2},\frac{\pi}{2}\right)\to\R\times\left(-\frac{\pi}{2},\frac{\pi}{2}\right)$, defined as 
$$
g_k(\xi,\psi)=\left(k\xi,\tan^{-1}\left(\frac{\tan\psi}{k}\right)\right).
$$
Now, let $p=\Phi(a,\xi,\psi)\in\Aa$, take $\gamma$ to be an horizontal path joining $e$ with $p$ and we construct, in cylindrical-logarithmic coordinates, $\tilde{f}_k:\mathbf{A}\to\mathbf{A}$, i.e. the lift of $g_k$ as
\begin{align}\label{the map}
    \tilde{f}_k(a,\xi,\psi)&=\left(\int_{\pi(\gamma)}g^*_k\tau,\,k\xi,\,\tan^{-1}\left(\frac{\tan\psi}{k}\right)\right)\nonumber\\
    &=\left(a-\frac{\psi}{2}+\frac{1}{2}\tan^{-1}\left(\frac{\tan\psi}{k}\right),\,k\xi,\,\tan^{-1}\left(\frac{\tan\psi}{k}\right)\right).
\end{align}
Let $r_0>1$, $0<\psi_0<\frac{\pi}{2}$ and let also the domain $D_{r_0,\psi_0}$. Consider another truncated cylindrical shell $D^k_{r_0,\psi_0}$. In cylindrical-logarithmic coordinates those domains are given by
\begin{align*}
    D_{r_0,\,\psi_0}=&\left\{\Phi\left(a+\frac{\tan\psi}{2}s,s,\psi\right)\in\Aa:a\in(0,1),\,\psi\in(0,\psi_0),\,s\in(0,\log r_0)\right\},\\
    D^k_{r_0,\,\psi_0}=&\left\{\Phi\left(a+\frac{\tan\psi}{2}s-\frac{\psi}{2}+\frac{1}{2}\tan^{-1}\left(\frac{\tan\psi}{k}\right),\,ks,\, \tan^{-1}\left(\frac{\tan\psi}{k}\right)\right)\in\Aa:\right.\\ 
    &\left.\;\;\; a\in(0,1),\,\psi\in(0,\psi_0),\,s\in(0,\log r_0)\right\}.
\end{align*}
Now, we setup a precise boundary condition for a mapping problem. The subsets $E$ and $F$ of $\partial D_{r_0,\,\psi_0}$ (see \eqref{E,F}) are given in cylindrical-logarithmic coordinates by
\begin{align*}
E=&\{\Phi\left(a,0,\psi\right)\in\Aa:a\in(0,1),\psi\in(0,\psi_0)\},\\
F=&\left\{\Phi\left(a+\frac{\tan\psi}{2},1,\psi\right)\in\Aa:a\in(0,1),\psi\in(0,\psi_0)\right\},
\end{align*}
respectively. Also,  we consider the following subsets of $\partial D^k_{r_0,\,\psi_0}$:
\begin{align*}
E^k=&\left\{\Phi\left(a-\frac{\psi}{2}+\frac{1}{2}\tan^{-1}\left(\frac{\tan\psi}{k}\right),0, \tan^{-1}\left(\frac{\tan\psi}{k}\right)\right)\in\Aa:
\, a\in(0,1),\psi\in(0,\psi_0)\right\},\\
F^k=&\left\{\Phi\left(a+\frac{\tan\psi}{2}-\frac{\psi}{2}+\frac{1}{2}\tan^{-1}\left(\frac{\tan\psi}{k}\right),k\log r_0, \tan^{-1}\left(\frac{\tan\psi}{k}\right)\right)\in\Aa:\right.\\
&\left.\;\;\,\,a\in(0,1),\psi\in(0,\psi_0)\right\}.
\end{align*}

Denote by $\mathcal{F}_k$ the class of all quasiconformal maps $\overline{D_{r_0,\,\psi_0}}\to\overline{ D^k_{r_0,\,\psi_0}}$. They map homeomorphically the component $E$ to $E^k$ and the component $F$ to $F^k$, respectively.\\\\
\noindent{\textit{Proof of Theorem \ref{Thm2}.}}
 We prove first that $f_k:D_{r_0,\,\psi_0}\to D^k_{r_0,\,\psi_0}$ is a quasiconformal map. The assumptions of Proposition \ref{prop tilde contact} are satisfied by the smooth map $\tilde{f}_k:\mathbf{A}\to\mathbf{A}$; thus the stretch map $f_{k\,|D_{r_0,\psi_0}}$ is a smooth contact transformation onto its image. Formula \eqref{Beltrami coeff} yields that
$$
\mu_{f_k}(\Phi(a,\xi,\psi))=e^{2i\psi}\frac{k^2-1}{k^2+2\tan^2\psi+1},\quad(a,\xi,\psi)\in\mathbf{A},
$$
proving $\|\mu_{f_k}\|_\infty<1$. 
We have therefore proved that $f_k$ is a smooth orientation preserving quasiconformal map on $D_{r_0,\psi_0}$ with
$$
\|\mu_{f_k}\|_\infty=\frac{1-k^2}{1+k^2}<1\;\text{and}\;K_{f_k}=\frac{1}{k^2}<\infty.
$$
We next prove that 
$$
\int_{D_{r_0,\psi_0}}K^2(p,f_k)\rho_0(p)^4\,d\mu_\Aa(p)\leq\int_{D_{r_0,\psi_0}}K^2(p,f)\rho_0(p)^4\,d\mu_\Aa(p),
$$
with $\rho_0(a,\lambda,t)=(\log r_0)^{-1}\frac{2\lambda}{ |\lambda+it|}$ for all $f\in\mathcal{F}_k$. In order to do so, we once more follow the steps of the proof as in Remark \ref{strategy}.\\
\noindent{1. } The class $\mathcal{F}_k$ is as above.\\
\noindent{2. } Let $\Delta=(0,1)\times(0,\psi_0)$; we define
\begin{equation}\label{foliating diffeo}
    \tilde{\gamma}:(0,\log r_0)\times\Delta\to\mathbf{A}, \quad \tilde{\gamma}(s,a,\psi)=(a(s),\,\xi(s),\,\psi(s))=\left(a+\frac{\tan\psi}{2}s,\,s,\,\psi\right),
\end{equation}
and
$$
\gamma:(0,\log r_0)\times\Delta\to D_{r_0,\psi_0},\quad \gamma(s,a,\psi)=\Phi(\tilde{\gamma}(s,a,\psi)).
$$
The smooth diffeomorphism $\gamma$ has nowhere vanishing Jacobian determinant $\det \gamma_* (s,a,\psi)=e^{2s}$. Further, for each fixed $(a,\psi)\in\Delta$ the curve 
$$
\gamma(\cdot,a,\psi):(0,\log r_0)\to D_{r_0,\psi_0}, \quad s\mapsto\Phi\left(a+\frac{\tan\psi}{2}s,s,\psi\right),
$$
is horizontal: indeed, we observe that
$$
\frac{\dot\psi(s)}{2}+\frac{\tan\psi(s)}{2}\dot\xi(s)-\dot a(s)=0,\quad s\in(0,\log r_0),
$$
and we use Proposition \ref{prop horiz curve}. Additionally, 
$$
|\dot \gamma(s,a,\psi)|_H=\frac{1}{2\cos\psi}\neq0 \quad \text{for all } (s,a,\psi)\in(0,\log r_0)\times\Delta.
$$
In this way, by introducing $\delta=(a,\psi)\in\Delta$, the volume element on $\Aa$ may be written as
$$
d\mu_{\Aa}(\gamma(s,\delta))=\frac{1}{\cos^2\psi}\,ds\,da\,d\psi=|\dot \gamma(s,a,\psi)|^4_H \,ds\,d\nu(a,\psi),
$$
where
$$
d\nu(a,\psi)=2^4 \cos^2\psi\,da\,d\psi.
$$
\noindent{3. } Our model curve family is
$$
\Gamma_0=\{\gamma(\cdot,a,\psi):(a,\psi)\in(0,1)\times(0,\psi_0)\}.
$$
According to Proposition \ref{P Mod_4 Gamma_0}, an extremal density for $\Gamma_0$ is $\rho_0$ defined by
\[
\rho_0(p)=\begin{cases}
    \log(r_0)^{-1}2\cos\psi, &\text{if } p=\gamma(s,a,\psi)\in D_{r_0,\psi_0},\\
    0, &\text{if } p\notin D_{r_0,\psi_0}\,,
\end{cases}
\]
and also
$$
\Mod_4(\Gamma_0)=\left(\frac{2}{\log r_0}\right)^3(\psi_0+\sin\psi_0\cos\psi_0)\,.
$$
\noindent{4. } Since we have 
$$
K(\gamma(s,a,\psi),f_k)=\frac{1}{k^2\cos^2\psi+\sin^2\psi},\, (s,a,\psi)\in(0,\log r_0)\times\Delta,
$$
we notice that the distortion $K(\gamma(s,a,\psi),f_k)$ does not depend on $s\in(0,\log r_0)$, but only on $\psi\in(0,\psi_0)$. This means that the distortion $K(\cdot,f_k)$ is constant along every curve $\gamma$ in the sense of \eqref{K const on curves}. \\
\noindent{5. } We use the criterion given in \eqref{MSP Phi} to verify the MSP for $f_k$ with respect to the curve family $\Gamma_0$. We check straightforwardly that
$$
 \frac{\dot\xi(s)-i\dot\psi(s)}{\dot\xi(s)+i\dot\psi(s)}\left(\frac{\overline Z(\Xi+i\Psi)}{Z(\Xi+i\Psi)}\right)_{|\tilde\gamma(s)}=\frac{k^2-1}{k^2+2\tan^2\psi+1}<0,
$$
for all $s\in(0,\log r_0)$. This holds true for all $k$ such that $0<k<1$. In this way, due to Proposition \ref{P Mod4 f Gamma_0}, we obtain
\begin{align}\label{mod f_k Gamma_0 radial stretch}
    \Mod_4(f_k(\Gamma_0))=&\frac{2^4}{(\log r_0)^3}\int_0^{\psi_0}\int_0^1 K^2_{f_k}(a,\psi)\cos^2\psi\,da\,d\psi\nonumber\\
    =&\frac{2^4}{(\log r_0)^3}\int_0^{\psi_0}\frac{\cos^2\psi}{(k^2\cos^2\psi+\sin^2\psi)^2}\,d\psi\\
    =&\int_{D_{r_0,\psi_0}}K^2(p,f_k)\rho_0^4(p)\,d\mu_{\Aa}(p)\nonumber.
\end{align}
\noindent{6. } We now define a bigger curve family $\Gamma\supseteq\Gamma_0$ for which $\rho_0$ is still admissible and such that $\Mod_4(f_k(\Gamma_0))\leq\Mod_4(f(\Gamma))$ for all $f\in\mathcal{F}_k$. A typical guess for $\Gamma$ is the family of $all$ absolutely continuous and almost everywhere horizontal curves contained in $D_{r_0,\psi_0}$ which are joining the two components $E$ and $F$. The boundary conditions for maps in the class $\mathcal{F}_k$ assure us that the image $f_k(\Gamma)$ is going to be a family of the same type in $D^k_{r_0.\psi_0}$. Using the absolute continuity of quasiconformal mappings on almost every curve up to a negligible family of curves with zero $4$-modulus and using the boundary conditions, we can show that 
\begin{equation}\label{mod ineq f_k}
    \Mod_4(f_k(\Gamma_0))\leq\Mod_4(f(\Gamma))\quad \text{for all } f\in\mathcal{F}_k.
\end{equation}
Eventually, we have to check that $\rho_0$ is admissible for the extended family $\Gamma$.
Observe that from Proposition \ref{prop horiz curve} it follows that for a curve $\gamma:[c,d]\to\Aa$ with $\gamma\in\Gamma$, we have
\begin{align*}
    \int_\gamma\rho_0\,d\ell=&\frac{1}{\log r_0} \int_c^{d}\sqrt{\dot\xi(s)^2+\dot\psi(s)^2}\,ds\geq
    \frac{1}{\log r_0}\int_c^{d}\dot\xi(s)\,ds \\
    =&\frac{1}{\log r_0}(\log r_0-0)=1\,.
\end{align*}
Here, to evaluate the integral we have used the fact that $s\mapsto\xi(s)$ is an absolutely continuous function and the conditions $\gamma(c)\in E$, $\gamma(d)\in F$.\\
We conclude that $\rho_0\in\Adm(\Gamma)$ and from Theorem \ref{Thm1} it follows that
$$
\int_{D_{r_0,\psi_0}}K(p,f_k)^2\rho_0(p)^4\,d\mu_\Aa(p)\leq\int_{D_{r_0,\psi_0}}K(p,f)^2\rho_0(p)^4\,d\mu_\Aa(p)\quad \text{for all } f\in\mathcal{F}_k\,.
$$
The proof is complete.
\qed
\begin{rem}
 It is straightforward to show that the map $f_k$, $k>1$ is quasiconformal with $K_{f_k}=k^2$. Indeed, it is enough to recover the same arguments from the first part of the above proof. However, proving extremality in the case $k>1$ requires a different argument.
\end{rem}
\begin{rem}
In cartesian coordinates, the map $f_k=\Phi\circ\tilde{f}_k\circ\Phi^{-1}:\Aa\to\Aa$, is given by 
$$
f_k(a,\lambda+it)=\left(a-\frac{1}{2}\tan^{-1}\left(\frac{t}{\lambda}\right)+\frac{1}{2}\tan^{-1}\left(\frac{t}{\lambda k}\right), \left(\frac{(\lambda^2+t^2)^k}{\lambda^2 k^2+t^2}\right)^{\frac{1}{2}}\cdot(\lambda k+it)\right).
$$
\end{rem}
\begin{rem}
By writing the coordinate map $\varphi:\R\times\left(-\frac{\pi}{2},\frac{\pi}{2}\right)\to\mathbf{H}^1_\C$ as $\varphi(\xi,\psi)=e^{\xi+i\psi}$, $(\xi,\psi)\in\R\times\left(-\frac{\pi}{2},\frac{\pi}{2}\right)$, and setting $\check{f}_k=\varphi\circ g_k\circ\varphi^{-1}:\mathbf{H}^1_\C\to\mathbf{H}^1_\C$, we notice that the map $f_k:\Aa\to\Aa$ has the lifting property $\pi\circ f_k=\check{f}_k\circ\pi$.
\end{rem}
\begin{rem}
    Making use of the formal substitution $k=-1$, we obtain that the map $f_{-1}:\Aa\to\Aa$ given by
$$
(a,\lambda+it)\mapsto\left(a-\tan^{-1}\left(\frac{t}{\lambda}\right),\frac{-\lambda+it}{|\lambda+it|^2}\right),
$$
is a contactomorphism with $f_{-1}^*\vartheta=\vartheta$ and also a conformal map ($1$-quasiconformal).
\end{rem}

\section{Open question}\label{OpPb}
In this final section we want to discuss the minimality of $f_k$ for the maximal distortion $K_{f_k}$, see \eqref{K_f}. Let $r_0>1$, $\psi_0\in\left(0,\frac{\pi}{2}\right)$ and we make use of the same notation as in the proof of Theorem \ref{Thm2}, reminding that the curve family $\Gamma\supseteq  \Gamma_0$ consists of all
horizontal curves contained in $D_{r_0,\psi_0}$ which connect the two boundary components $E$ and $F$ of $\partial D_{r_0,\psi_0}$. 
By coupling the modulus inequality given in \eqref{mod ineq f_k} with the right inequality in \eqref{qi 4-Mod}, we obtain the chain of inequalities
\begin{equation}\label{mod consequence}
   \frac{\Mod_4 (f_k(\Gamma))}{\Mod_4(\Gamma)}\leq \frac{\Mod_4 (f(\Gamma))}{\Mod_4(\Gamma)}\leq K^2_f,\quad f\in\mathcal{F}_k.
\end{equation}
Now, notice that if we had 
\begin{equation}\label{hope mod}
K^2_{f_k}=\frac{\Mod_4 (f_k(\Gamma))}{\Mod_4(\Gamma)},
\end{equation}
we would conclude
$$K_{f_k}\leq K_f,\quad f\in\mathcal{F}_k.$$
On the other hand, this is not the case because \eqref{hope mod} does not hold for all $\psi_0\in\left(0,\frac{\pi}{2}\right)$.
To this end, we recall that the density $\rho_0$ is still admissible for the larger family $\Gamma\supseteq  \Gamma_0$. This gives
the modulus identity $\Mod_4(\Gamma)=\Mod_4(\Gamma_0)$ and thus
$$
\Mod_4(\Gamma)=\left(\frac{2}{\log r_0}\right)^3(\psi_0+\sin\psi_0\cos\psi_0)\,.
$$ 
To $\rho_0$ we can assign a pushforward density ${f_k}_\#\rho_0$ given by 
$$
{f_k}_\#\rho_0(q)=\left\{\begin{matrix}
    \frac{2(f_k)_2(f_k^{-1}(q))}{|Z(f_k)_I(f_k^{-1}(q))|-|\overline Z(f_k)_I(f_k^{-1}(q))|}\rho_0(f_k^{-1}(q)), &\text{if }& q\in D^k_{r_0,\psi_0},\\
    \\
    0, &\text{if }& q\notin D^k_{r_0,\psi_0}.
\end{matrix}\right.
$$
Based on the proof of Theorem \ref{Thm2}, it is straightforward to see that ${f_k}_\#\rho_0\in\Adm(f_k(\Gamma))$, giving an analogous modulus identity $\Mod_4(f_k(\Gamma))=\Mod_4(f_k(\Gamma_0))$. We refer to \eqref{mod f_k Gamma_0 radial stretch} and this gives  
$$
\Mod_4(f_k(\Gamma))=\left(\frac{2}{\log r_0}\right)^3 k^{-3}\left(\frac{k \sin2\psi_0}{1+k^2+(k^2-1)\cos2\psi_0}+\tan^{-1}\left(\frac{\tan\psi_0}{k}\right)\right).
$$
For $\psi_0\in(\frac{\pi}{4},\frac{\pi}{2})$ we observe that the function
$$
k\mapsto\frac{k^\frac{3}{2} \sin2\psi_0}{1+k^2+(k^2-1)\cos2\psi_0}+k^\frac{1}{2}\tan^{-1}\left(\frac{\tan\psi_0}{k}\right),\quad k\in(0,1),
$$
is monotone increasing (by a direct calculation the derivative is positive for $k\in(0,1)$) and thus bounded from above by $\psi_0+\sin\psi_0\cos\psi_0$.
Therefore
\begin{equation*}
    \frac{\Mod_4(f_k(\Gamma))}{\Mod_4(\Gamma)}
    \leq k^{-\frac{7}{2}}< k^{-4}=K^2_{f_k}.
\end{equation*}
 In the case $\psi_0\in\left(\frac{\pi}{4},\frac{\pi}{2}\right)$ we see that equality does not necessarily hold for the stretch map $f_k$ and the curve family $\Gamma$. Despite the latter inequality holds strictly it could still be true that $f_k$ is minimal for the maximal distortion and it is an open question. 

\section{Appendix}\label{Appendix}
\subsection{Background results on quasiconformal mappings in the affine-additive group.}
\subsubsection{The affine-additive group.}
The \textit{affine-additive group} is a Lie group with underlying manifold $\R\times\mathbf{H}^1_\C$. 
We recall the complex vector fields (shortly CVF) $Z,\overline Z$, defined in \eqref{CVF}, and notice that they satisfy the non-trivial commutator identity $[Z,\overline Z]=(\overline Z-Z)+iW$. The Lie algebra of left invariant vector fields of the affine-additive group admits a grading   $$\text{span}_\R\{\text{Im}Z,\text{Re}Z\}\oplus\text{span}_\R\{W\}.$$
The elements of the first layer are referred as \textit{horizontal left invariant vector fields}. The horizontal bundle $\mathcal{H}_\Aa$ is the subbundle of the tangent bundle $T(\Aa)$ whose fibers are the \textit{horizontal subspaces} 
$$
\mathcal{H}_{p,\,\Aa}=\text{span}_\R\{\text{Im}Z_p,\text{Re}Z_p\},\quad p\in\Aa.
$$
Recall also that the contact form for $\Aa$ is
$
\vartheta=\frac{dt}{2\lambda}-da.
$
A contact transformation $f:\Omega\to\Omega'$ on $\Aa$ is a diffeomorphism between domains $\Omega$ and $\Omega'$ in $\Aa$ which preserves the contact structure, i.e.
\begin{equation}\label{Cont Cond}
    f^*\vartheta=\sigma\vartheta,
\end{equation}
for some non-vanishing smooth function $\sigma:\Aa\to\R$. Through the identification of $\Aa$ with $\R\times\mathbf{H}^1_\C$ we write $f=(f_1,f_I)$, $f_I=f_2+if_3$. A contact map $f$ is determined by the following system of p.d.e.s
\begin{align}\label{contact cond Z, Zbar}
    &Zf_3=2f_2 Zf_1,\nonumber\\
    &\overline Zf_3=2f_2\overline Zf_1,\\
    &Wf_3=2f_2(\sigma+Wf_1)\nonumber.
\end{align}

\subsubsection{Quasiconformal mappings.}
Any smooth and metric quasiconformal map between domains in $\Aa$ is locally a contact transformation; this fact comes as a consequence of Proposition 3.3 in \cite{BBP1} and Theorem 1 in \cite{KR1}. Let $\mathcal{L}^3$ be the three dimensional Lebesgue measure on $\R\times\mathbf{H}^1_\C$: we point out that $\mu_{\Aa}\ll\mathcal{L}^3$ and therefore the terminology "almost everywhere" is well defined on $\Aa$ in the sense of $\mathcal{L}^3$. In general, quasiconformal maps on $\Aa$ do not need to be smooth, but rather they belong to an apposite class of Sobolev mappings and they satisfy the contact conditions almost everywhere. Explicitly, let $\leq p<\infty$ and let $\Omega$ be a domain in $\Aa$. We say that a function $u:\Omega\to\C$ belongs to the \textit{horizontal Sobolev space}, $u\in HW^{1,p}(\Omega,\C)$, if $u\in L^p(\Omega,\C)$ and there exist functions $v,w\in L^p(\Omega,\C)$ such that
$$
\int_\Omega v\varphi\,d\mu_\Aa=-\int_\Omega uZ\varphi\,d\mu_\Aa,
\quad
\text{and}
\quad
\int_\Omega w\varphi\,d\mu_\Aa=-\int_\Omega u\overline Z\varphi\,d\mu_\Aa 
$$
for all $\varphi\in C^\infty_0(\Omega,\R)$.
For such a function $u\in HW^{1,p}(\Omega,\C)$, we denote by $Zu$ and $\overline Zu$ the weak horizontal complex derivatives $v$ and $w$. This definition is compatible with the theory of upper gradients on Carnot-Carath\'eodory spaces formulated in \cite{HajK}. A map $f=(f_1,f_I):\Omega\to\Aa$ is said to belong to $HW^{1,p}(\Omega,\Aa)$ if and only if $f_1$, $f_I$ are in $HW^{1,p}(\Omega,\C)$. It is straightforward to define the local horizontal Sobolev spaces $HW^{1,p}_{loc}$.\\
We have that $(\Aa,d_\Aa,\mu_\Aa)$ is a locally $4$-Ahlfors regular space, cf. Proposition 3.5 in \cite{BBP1}.
Combining the last fact with Theorem 11.20 in \cite{HajK}, Theorem 1.1 in \cite{BKR} and Proposition 3.1 in \cite{Shan} we deduce the following result for metric quasiconformal maps on $\Aa$. 
\begin{prop}
    Let $f:\Omega\to\Omega'$ be a quasiconformal mapping between domains $\Omega,\Omega'\subseteq\Aa$. Then the pointwise derivatives $(Re Z)f$ and $(Im Z)f$ exist almost everywhere and coincide with the distributional derivatives almost everywhere.
\end{prop}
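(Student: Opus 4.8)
The plan is to assemble the statement from the three cited references together with the metric-measure structure of $\Aa$, rather than to carry out an independent differentiability argument. The starting point is to upgrade the ambient geometry: beyond being locally $4$-Ahlfors regular, $(\Aa,d_\Aa,\mu_\Aa)$ supports a local $(1,4)$-Poincar\'e inequality. Indeed $\Aa$ is an equiregular step-two sub-Riemannian manifold whose horizontal distribution $\mathcal{H}_\Aa$ is bracket-generating, and for such spaces the local Poincar\'e inequality for the horizontal gradient is classical. Together with local Ahlfors regularity this makes $\Aa$, locally, a $4$-Loewner space, which is precisely the setting in which \cite{HK}, \cite{BKR}, \cite{HajK} and \cite{Shan} operate.

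First I would invoke Theorem 1.1 in \cite{BKR}. Since $f:\Omega\to\Omega'$ is a metric quasiconformal map between domains in a locally $4$-Ahlfors regular $4$-Loewner space, that theorem yields that $f$ is absolutely continuous on $4$-almost every rectifiable curve, the exceptional and non-rectifiable curves forming a family of vanishing $4$-modulus. This $\mathrm{ACC}_4$ property is the analytic backbone of the argument; it is in fact the same fact already used in the proof of Lemma \ref{chain rule} and in Proposition \ref{quasi-invariance for Mod_4}, guaranteeing that $f\circ\gamma$ is absolutely continuous for $\Mod_4$-almost every $\gamma$.

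Next I would pass from curve-wise absolute continuity to membership in the horizontal Sobolev class. The metric quasiconformality bound, in the presence of the $4$-Loewner property, already forces the minimal upper gradient of $f$ to be locally $4$-integrable by the Heinonen--Koskela theory \cite{HK}. Proposition 3.1 in \cite{Shan} then places $f$ in the Newtonian space $N^{1,4}_{loc}$, and Theorem 11.20 in \cite{HajK}, invoked together with the local $(1,4)$-Poincar\'e inequality, identifies $N^{1,4}_{loc}$ with the horizontal Sobolev space $HW^{1,4}_{loc}(\Omega,\Aa)$ and shows that the minimal upper gradient agrees $\mu_\Aa$-a.e. with the horizontal gradient $\big(|(\mathrm{Re}\,Z)f|^2+|(\mathrm{Im}\,Z)f|^2\big)^{1/2}$. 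In particular $f\in HW^{1,4}_{loc}(\Omega,\Aa)$, so the weak derivatives $Zf$ and $\overline Z f$ are represented by locally $4$-integrable functions.

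Finally, the coincidence of the pointwise and distributional derivatives follows by combining the $\mathrm{ACC}_4$ property with the integration-by-parts definition of $HW^{1,p}$. Because $f$ is absolutely continuous along $4$-almost every horizontal curve, for $\mu_\Aa$-a.e. $p$ the ordinary derivatives of $f$ along the horizontal directions through $p$ exist and furnish $(\mathrm{Re}\,Z)f(p)$ and $(\mathrm{Im}\,Z)f(p)$; a Fubini-type argument over the foliations of $\Omega$ by horizontal curves then shows that these pointwise derivatives serve as the weak derivatives appearing in the definition of $HW^{1,4}$, hence agree $\mu_\Aa$-a.e. with the distributional $Zf$ and $\overline Z f$. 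The main obstacle is one of compatibility rather than of new estimates: one must verify that $\Aa$, which is \emph{not} a Carnot group since it is non-nilpotent, nonetheless satisfies every structural hypothesis---local Ahlfors regularity and, crucially, the local Poincar\'e inequality---under which \cite{HK}, \cite{BKR}, \cite{HajK} and \cite{Shan} are proved, and that the a priori distinct Sobolev notions ($N^{1,4}$, $HW^{1,4}$, and the upper-gradient class) genuinely coincide on $\Aa$; once this identification is secured, the existence and the agreement of the derivatives are automatic.
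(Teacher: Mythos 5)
Your proposal follows essentially the same route as the paper: the paper's proof is precisely the citation argument you assemble, namely local $4$-Ahlfors regularity of $(\Aa,d_\Aa,\mu_\Aa)$ (Proposition 3.5 in \cite{BBP1}) combined with Theorem 11.20 in \cite{HajK}, Theorem 1.1 in \cite{BKR} and Proposition 3.1 in \cite{Shan}. Your extra remarks on the local Poincar\'e inequality, the Loewner property, and the identification of the Newtonian class with $HW^{1,4}_{loc}$ simply make explicit the structural hypotheses that the paper leaves implicit when invoking those references.
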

In \cite{BBP1}, we proved that the Hausdorff dimension of $(\Aa,d_\Aa)$ is 4. It comes out that the corresponding Sobolev class for quasiconformal mappings in the affine-additive group is $HW^{1,4}_{loc}$.\\
A mapping $f\in HW^{1,4}_{loc}(\Omega,\Aa)$ is called \textit{weakly contact} if the system of p.d.e.s \eqref{contact cond Z, Zbar} holds almost everywhere in $\Omega$.
For such a mapping, we define the \textit{formal tangent map} $$(f_*)_p:T_p\Aa\rightarrow T_{f(p)}\Aa,$$ for almost every $p\in\Omega$. We express it in terms of the bases given by $\mathcal{B}^{CVF}_p=\{Z_p,\overline Z_p,W_p\}$ and $\mathcal{B}^{CVF}_{f(p)}=\{Z_{f(p)},\overline Z_{f(p)},W_{f(p)}\}$ as 
\begin{equation*}
    f_*=\left[\begin{array}{ccc}
         Zf_I/2f_2 & \overline Zf_I/2f_2 & * \\
         Z\overline{f_I}/2f_2 & \overline{Z}\,\overline{f_I}/2f_2 & * \\
         0 & 0 & \sigma
    \end{array}\right],
\end{equation*}
and define the \textit{formal horizontal differential} to be the restriction $D_Hf(p):\mathcal{H}_{p,\Aa}\to\mathcal{H}_{f(p),\Aa}$ given by
\begin{equation*}
    D_Hf(p)=\left[\begin{array}{cc}
         Zf_I/2f_2 & \overline Zf_I/2f_2  \\
         Z\overline{f_I}/2f_2 & \overline{Z}\,\overline{f_I}/2f_2 
    \end{array}\right].
\end{equation*}
Using the commutator relation together with the system \eqref{contact cond Z, Zbar}, we find
$
\sigma=\frac{1}{4f_2^2}\big(|Zf_I|^2-|\overline Z f_I|^2\big)
$ almost everywhere. This yields 
\begin{equation}
  \det (f_*)_p=  \frac{1}{(2f_2(p))^4}\big(|Zf_I(p)|^2-|\overline Z f_I(p)|^2\big)^2 \text{ a.e. in }\Omega.
\end{equation}
Further, set $p\in\Aa$ and $r>0$; we define the volume derivative for $f$ with respect to $\mu_\Aa$ the limit
\begin{equation}
    \mathcal{J}_{\mu_\Aa}(p,f)=\lim_{r\rightarrow0}\frac{\mu_{\Aa}(f(B_\Aa(p,r)))}{\mu_{\Aa}(B_\Aa(p,r))}.
\end{equation}
An interesting property is formulated in the following:
\begin{lem}\label{Jac WC}
Let $f:\Omega\to\Omega'$ be a weakly contact transformation.
Then the identity
$$\mathcal{J}_{\mu_\Aa}(p,f)=\det (f_*)_p,$$ holds almost everywhere in $\Omega$.    
\end{lem}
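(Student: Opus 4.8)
The plan is to identify the metric volume derivative with the homogeneous Jacobian of the pointwise differential, in three stages: produce a differential almost everywhere, realise $\mathcal{J}_{\mu_\Aa}(\cdot,f)$ as a Radon--Nikodym derivative, and then evaluate that derivative by a blow-up at the tangent cone. First I would invoke almost everywhere differentiability in the sense of Margulis--Mostow \cite{MM}: since $f$ is weakly contact and lies in $HW^{1,4}_{loc}$, at $\mu_\Aa$-a.e.\ $p$ it admits a differential $L_p$ which is a homogeneous homomorphism of the tangent cone. As $\Aa$ is an equiregular contact $3$-manifold, this tangent cone is the first Heisenberg group $\mathbb{H}^1$ at every point (Mitchell \cite{M}). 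In the frame $\{Z,\overline Z,W\}$ the differential $L_p$ is represented by the matrix $(f_*)_p$; its horizontal block $D_Hf(p)$ has real determinant $\sigma=\frac{1}{4f_2^2}\big(|Zf_I|^2-|\overline Zf_I|^2\big)$, and the homomorphism property forces $L_p$ to act on the centre $W$ by the same factor $\sigma$. Consequently $L_p$ dilates the Haar measure of $\mathbb{H}^1$ by the homogeneous Jacobian $\sigma^2=\det(f_*)_p$; this purely algebraic computation is exactly the content of the displayed formula for $\det(f_*)_p$.

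Next I would turn the limit defining $\mathcal{J}_{\mu_\Aa}(p,f)$ into a differentiation statement. Because $f$ is a quasiconformal homeomorphism it satisfies Lusin's condition (N), as is standard for quasiconformal maps between Ahlfors-regular spaces (cf.\ \cite{BKR}), so the Borel set function $E\mapsto\mu_\Aa(f(E))$ extends to a Radon measure $\nu_f$ that is absolutely continuous with respect to $\mu_\Aa$. Since $(\Aa,d_\Aa,\mu_\Aa)$ is locally $4$-Ahlfors regular (Proposition 3.5 in \cite{BBP1}), the measure $\mu_\Aa$ is doubling, and the Lebesgue--Besicovitch differentiation theorem applies: for $\mu_\Aa$-a.e.\ $p$,
\[
\mathcal{J}_{\mu_\Aa}(p,f)=\lim_{r\to0}\frac{\nu_f(B_\Aa(p,r))}{\mu_\Aa(B_\Aa(p,r))}=\frac{d\nu_f}{d\mu_\Aa}(p),
\]
so that the volume derivative exists almost everywhere and coincides with the Radon--Nikodym derivative.

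Finally I would evaluate this derivative by blowing up at a point $p$ where both $L_p$ and the limit above exist. Using the dilations $\delta_r$ and left translations of the tangent cone, the rescaled maps
\[
f_r:=\delta_{1/r}\circ\tau_{f(p)}^{-1}\circ f\circ\tau_p\circ\delta_r\longrightarrow L_p
\]
converge locally uniformly, and the Margulis--Mostow estimate squeezes $f(B_\Aa(p,r))$ between the images $\tau_{f(p)}\circ\delta_r\circ L_p$ of concentric balls of radii $(1\mp\varepsilon)r$ for small $r$. Invoking left-invariance of $\mu_\Aa$, the $r^4$-homogeneity of the Haar measure of $\mathbb{H}^1$ under $\delta_r$, and the fact that near $p$ the measure $\mu_\Aa$ is asymptotic to this Haar measure (the tangent measure of an equiregular space), I would pass to the limit in the ratio and obtain $\mathcal{J}_{\mu_\Aa}(p,f)=\det(f_*)_p$. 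The main obstacle is precisely this last passage: one must control the images $f(B_\Aa(p,r))$ uniformly and justify that the sub-Riemannian balls and the measure $\mu_\Aa$ are approximated, at the correct rate, by the balls and Haar measure of the Heisenberg tangent cone. This tangent-cone approximation --- rather than the algebra of the differential --- is the genuinely delicate point, and it is what upgrades the pointwise convergence $f_r\to L_p$ to convergence of the measure ratios.
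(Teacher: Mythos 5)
Your route is genuinely different from the paper's, but it has a real gap at its decisive step. The paper never touches the Heisenberg tangent cone; your proposal reduces the lemma to a blow-up statement there, and the passage you yourself flag as ``the genuinely delicate point'' --- upgrading the locally uniform convergence $f_r\to L_p$ to convergence of the ratios $\mu_\Aa(f(B_\Aa(p,r)))/\mu_\Aa(B_\Aa(p,r))$ --- is precisely what is not proved. To close it you would need a quantitative form of Mitchell's theorem \cite{M} at both $p$ and $f(p)$: convergence of the rescaled $d_\Aa$-balls to the tangent-cone balls and of $\mu_\Aa(B_\Aa(\cdot,r))/r^4$ to the tangent Haar measure of the unit ball, together with bookkeeping of the two normalization constants (which carry the densities $\lambda^{-2}$ and $f_2^{-2}$) so that they combine correctly with the homogeneous Jacobian $\sigma^2$ of $L_p$; none of this is carried out, and it is not a routine technicality. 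A second, smaller gap: you assert that the Margulis--Mostow differential $L_p$ ``is represented by the matrix $(f_*)_p$'' in the frame $\{Z,\overline Z,W\}$, but for a map that is merely weakly contact and in $HW^{1,4}_{loc}$, identifying the metric differential of \cite{MM} with the formal tangent map built from the a.e.\ weak derivatives itself requires an argument.

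For comparison, the paper's proof is short because it exploits $\mu_\Aa\ll\mathcal{L}^3$ with smooth positive density $\lambda^{-2}$: a limit argument together with the Euclidean change of variables theorem gives, at almost every $p=(a,\lambda+it)$,
\begin{equation*}
\mathcal{J}_{\mu_\Aa}(p,f)=\frac{\lambda^2}{f_2^2(p)}\,\mathcal{J}(p,f),
\end{equation*}
where $\mathcal{J}(p,f)$ is the Jacobian determinant of the tangent map in the canonical bases $\{\partial_a,\partial_\lambda,\partial_t\}$, and then a pure change-of-basis computation (the transition matrices from the canonical frames to $\{Z,\overline Z,W\}$ at $p$ and $f(p)$ have determinants proportional to $\lambda^2$ and $f_2^2$, respectively) converts this into $\det(f_*)_p=\frac{1}{(2f_2(p))^4}\big(|Zf_I(p)|^2-|\overline Zf_I(p)|^2\big)^2$. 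Your algebraic stage is correct (the homogeneous Jacobian is indeed $\sigma^2=\det(f_*)_p$), and your Radon--Nikodym identification of the volume derivative is sound for the doubling measure $\mu_\Aa$ (cf.\ \cite{BBP1}), but all the hard analytic content your approach demands is exactly what the paper's density argument makes unnecessary.
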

\begin{proof}
    We observe first that a limit argument and the change of variables theorem induce that at almost every point $p=(a,\lambda+it)\in\Omega$ we have
\begin{equation*}
    \mathcal{J}_{\mu_\Aa}(p,f)=\frac{\lambda^2}{f_2^2(p)}\mathcal{J}(p,f).
\end{equation*}
Here, $\mathcal{J}(p,f)$ corresponds to the determinant of the tangent map $(f_*)_p:T_p\Aa\rightarrow T_{f(p)}\Aa$ considered as a linear map with respect to the canonical bases $\mathcal{B}^{Can}_p=\{\partial_{a|_p},\partial_{\lambda|_p},\partial_{t|_p}\}$ and $\mathcal{B}^{Can}_{f(p)}=\{\partial_{a|_{f(p)}},\partial_{\lambda|_{f(p)}},\partial_{t|_{f(p)}}\}$. The change of bases formula describing the compositions 
$$
\mathcal{B}^{Can}_p\mapsto\mathcal{B}^{CVF}_p\mapsto\mathcal{B}^{CVF}_{f(p)}\mapsto\mathcal{B}^{Can}_{f(p)}
$$ leads to:
\begin{equation*}
    \mathcal{J}_{\mu_\Aa}(p,f)=\frac{1}{(2f_2(p))^4}\big(|Zf_I(p)|^2-|\overline Z f_I(p)|^2\big)^2\,.
\end{equation*}
\end{proof}
We consider the curve family
$$
\Gamma_1=\{\gamma,\gamma:[0,1]\to\Aa\text{ horizontal curve with }\gamma(0)=p \text{ and }|\dot\gamma|_H=1\}
$$
and we define the quantity
$
\|D_H f(p)\|=\max\{|(f\circ\gamma)^\cdot|_H:\gamma\in\Gamma_1\}.
$
Using the complex notation we find the explicit formula
$$
\|D_H f(p)\|=\frac{|Zf_I(p)|+|\overline Z f_I(p)|}{2f_2(p)}\text{ a.e. in }\Omega.
$$
Analytic definition of quasiconformality in $\Aa$ is now in order: 
\begin{defn}[Analytic definition]
    A homeomorphism $f:\Omega\to\Omega'$ between domains $\Omega,\Omega'$ in $\Aa$ is $K$-\textit{quasiconformal} if $f\in HW^{1,4}_{loc}(\Omega,\Aa)$ is weakly contact, and there exists a constant $1\leq K<\infty$ such that
\begin{equation}\label{an qc}
    \|D_H f(p)\|^4\leq K\mathcal{J}_{\mu_\Aa}(p,f) \quad \text{for almost every } p\in\Omega.
\end{equation}
A map is \textit{quasiconformal}, if it is $K$-quasiconformal for some $K$.
\end{defn}
It is straightforward to verify that a $K$-quasiconformal map in the analytic sense has a $K$-quasiconformal inverse. The latter result together with Theorem 3.8 with \cite{KW} induce that a homeomorphism is quasiconformal according the analytic sense if and only if it is quasiconformal according to the metric one.\\
It can be proven that $\mathcal{J}_{\mu_\Aa}(\cdot,f)\neq0$ a.e. for a quasiconformal mapping $f$. The above considerations show that for a quasiconformal map $f:\Omega\to\Omega'$ between domains in the affine-additive group the following holds:
$$
K(p,f)^2=\frac{\|D_H f(p)\|^4}{\mathcal{J}_{\mu_\Aa}(p,f)}=\left(\frac{|Zf_I(p)|+|\overline Z f_I(p)|}{|Zf_I(p)|-|\overline Z f_I(p)|}\right)^2\text{ a.e. in }\Omega.
$$
By setting $K(\cdot,f)^2=1$ at the points where $\mathcal{J}_{\mu_\Aa}(\cdot,f)=0$, we obtain that $K(\cdot,f)^2$ is a measurable function on $\Omega$ which is finite almost everywhere.
A quasiconformal map $f:\Omega\to\Omega'$ between domains in the affine-additive group is called \textit{orientation preserving} if 
$$
\det D_H f(p)>0 \quad\text{for almost every }p\in\Omega.
$$
By recalling the expressions defined in the introduction
$$\mu_f(p)=\frac{\overline{Z}f_I(p)}{Zf_I(p)},\quad K(p,f)=\frac{|Zf_I(p)|+|\overline Z f_I(p)|}{|Zf_I(p)|-|\overline Z f_I(p)|}$$
and by defining 
$$
\|\mu_f\|_\infty=\esssup_p|\mu_f(p)|,\quad K_f=\esssup_p K(p,f),
$$
the explicit relation between $\|\mu_f\|_\infty$ and $K_f$ can now be written explicitly as follows:
\begin{equation}\label{K_f}
    K(p,f)=\frac{1+|\mu_f(p)|}{1-|\mu_f(p)|},\quad K_f=\frac{1+\|\mu_f\|_\infty}{1-\|\mu_f\|_\infty}.
\end{equation}
We state below a change of variable formula for integration in the case of quasiconformal mappings on the affine-additive group whose proof can be found in \cite{Bubani}.
\begin{prop}
    Let $f:\Omega\to\Omega'$ be a quasiconformal mapping between domains $\Omega,\Omega'\subseteq\Aa$. Then the following transformation formula holds: if $u:\Aa\to\R$ is a measurable non-negative function, then the function $p\mapsto(u\circ f)(p)J_{\mu_\Aa}(p,f)$ is measurable and we have
    $$
    \int_\Omega(u\circ f)(p)\mathcal{J}_{\mu_\Aa}(p,f)\,d\mu_\Aa(p) =\int_{\Omega'} u(q)\,d\mu_\Aa(q).
    $$
\end{prop}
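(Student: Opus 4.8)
The plan is to recognize the asserted identity as the change-of-variables formula for the pullback of $\mu_\Aa$ under $f$, and to identify the density of this pullback with the volume derivative $\mathcal{J}_{\mu_\Aa}(\cdot,f)$ by means of Lebesgue–Radon–Nikodym differentiation on the doubling space $(\Aa,d_\Aa,\mu_\Aa)$. First I would reduce to the case of indicator functions. Writing a non-negative measurable $u$ as the pointwise increasing limit of simple functions and applying the monotone convergence theorem to both sides, it suffices to treat $u=\chi_A$ for a Borel set $A\subseteq\Omega'$. In that case the claimed equality becomes $\int_{f^{-1}(A)}\mathcal{J}_{\mu_\Aa}(p,f)\,d\mu_\Aa(p)=\mu_\Aa(A)$, equivalently
$$
\int_E \mathcal{J}_{\mu_\Aa}(p,f)\,d\mu_\Aa(p)=\mu_\Aa(f(E))\qquad\text{for every Borel }E\subseteq\Omega.
$$

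Next I would introduce the set function $\nu(E):=\mu_\Aa(f(E))$ for Borel $E\subseteq\Omega$. Since $f$ is a homeomorphism it maps Borel sets to Borel sets and compact sets to compact sets, so $\nu$ is a well-defined, countably additive, locally finite Borel measure on $\Omega$. The crucial point is to establish the absolute continuity $\nu\ll\mu_\Aa$, i.e. the Lusin condition (N) for $f$. Here I would invoke the analytic description built in the Appendix: $f\in HW^{1,4}_{loc}(\Omega,\Aa)$ is weakly contact and satisfies the distortion inequality \eqref{an qc}, while $(\Aa,d_\Aa,\mu_\Aa)$ is locally $4$-Ahlfors regular (Proposition 3.5 in \cite{BBP1}), so that the Sobolev exponent $4$ matches the homogeneous dimension. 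The standard capacity/upper-gradient argument for quasiconformal maps on Ahlfors regular spaces then yields that images of $\mu_\Aa$-null sets are $\mu_\Aa$-null, giving $\nu\ll\mu_\Aa$.

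Then I would differentiate. Local $4$-Ahlfors regularity makes $\mu_\Aa$ doubling, so the Lebesgue–Radon–Nikodym theorem applies: the absolutely continuous measure $\nu$ admits a density $g=d\nu/d\mu_\Aa\in L^1_{loc}(\Omega)$ which, for $\mu_\Aa$-almost every $p$, is given by the symmetric limit $g(p)=\lim_{r\to0}\nu(B_\Aa(p,r))/\mu_\Aa(B_\Aa(p,r))$. By the very definition of $\nu$ this limit equals $\lim_{r\to0}\mu_\Aa(f(B_\Aa(p,r)))/\mu_\Aa(B_\Aa(p,r))=\mathcal{J}_{\mu_\Aa}(p,f)$, so $g=\mathcal{J}_{\mu_\Aa}(\cdot,f)$ almost everywhere, which is exactly the displayed indicator identity. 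Finally, the pushforward change of variables for $\nu$ (immediate for indicators and extended by monotone convergence) gives, for general non-negative measurable $u$,
$$
\int_{\Omega'} u(q)\,d\mu_\Aa(q)=\int_\Omega (u\circ f)(p)\,d\nu(p)=\int_\Omega (u\circ f)(p)\,\mathcal{J}_{\mu_\Aa}(p,f)\,d\mu_\Aa(p),
$$
together with the measurability of $p\mapsto (u\circ f)(p)\mathcal{J}_{\mu_\Aa}(p,f)$, as required.

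I expect the main obstacle to be the absolute continuity $\nu\ll\mu_\Aa$, that is, condition (N). The reduction to indicators, the measure-theoretic differentiation, and the pushforward identity are all routine once one works in a doubling space. Condition (N) is where quasiconformality is genuinely used: it relies on the matching between the Sobolev class $HW^{1,4}$ and the homogeneous dimension $4$ of $\Aa$, and some care is needed to keep the capacity estimate local, covering $\Omega$ by balls on which the Ahlfors regularity constants are uniform, since $\Aa$ is only locally $4$-Ahlfors regular. A detailed treatment of this step is carried out in \cite{Bubani}.
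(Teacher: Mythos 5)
The paper itself offers no argument for this proposition: it is stated in the Appendix with the remark that the proof can be found in \cite{Bubani}, so there is no internal proof to compare against; what can be judged is whether your outline is sound, and it is. Your route --- reduce to indicator functions by monotone convergence, introduce the pushforward measure $\nu(E)=\mu_\Aa(f(E))$, establish $\nu\ll\mu_\Aa$ (Lusin's condition (N)), and identify the Radon--Nikodym density with the volume derivative $\mathcal{J}_{\mu_\Aa}(\cdot,f)$ by Lebesgue differentiation on a locally doubling space --- is the standard proof of such transformation formulas, and you correctly isolate condition (N) as the only step where quasiconformality is genuinely used. Two cautions are in order. First, do not let the justification of (N) read as if it followed from $f\in HW^{1,4}_{loc}$ plus the matching of the Sobolev exponent with the homogeneous dimension: even in $\R^n$ there exist homeomorphisms of class $W^{1,n}_{loc}$ that fail condition (N) (Ponomarev's examples), so the distortion inequality \eqref{an qc}, equivalently local quasisymmetry, must enter the covering/upper-gradient argument in an essential way; your closing remark acknowledges this, but the phrase about exponent matching alone would not suffice. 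Second, for $u$ merely $\mu_\Aa$-measurable (rather than Borel), the measurability of $u\circ f$ requires that $f^{-1}$ not create null sets, i.e.\ condition (N) for $f^{-1}$ --- harmless here since $f^{-1}$ is also quasiconformal, but worth a line when you replace $u$ by a Borel representative. With these provisos, your proposal is as complete as the paper's own treatment, which likewise defers the analytic core to \cite{Bubani} and to the results of \cite{HajK}, \cite{BKR}, \cite{Shan} quoted in the Appendix.
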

\subsection{Modulus of curve families.}
\subsubsection{Curves in the affine-additive group.}
Any curve $\gamma$ in $\Aa$ shall be always
considered continuous. The points on a curve $\gamma : [c, d] \to \Aa$ are
denoted by 
$$
\gamma(s)=(\gamma_1(s),\gamma_I(s))\in\R\times\mathbf{H}^1_\C.
$$
An absolutely continuous curve $\gamma:[c,d]\to\Aa$ (in the Euclidean sense)
is called \textit{horizontal} if
$$
\dot\gamma(s)\in\ker\vartheta_{\gamma(s)}\quad \text{for almost every } s\in[c,d].
$$
The length of a horizontal curve corresponds to
$$
\ell(\gamma)=\int_c^d|\dot\gamma(s)|_H\,ds.
$$
We say that $\gamma$ is \textit{rectifiable} when $\ell(\gamma)$ is finite, moreover we say that $\gamma$ is \textit{locally rectifiable} when all its closed sub-curves are rectifiable.\\
If $\gamma:[c,d]\to\Aa$ is a rectifiable curve, the line integral over $\gamma$ of a Borel function $\rho:\Aa\to[0,\infty]$ is defined as
$$
\int_\gamma \rho \,d\ell=\int_c^d \rho(\gamma(s))|\dot\gamma(s)|_H\,ds,
$$
and in case $\gamma$ is only locally rectifiable, we set
$$
\int_\gamma \rho \,d\ell=\sup\left\{\int_{\gamma'} \rho\,d\ell:\gamma'\text{ is a rectifiable subcurve of }\gamma\right\}.
$$

\subsubsection{Modulus of a curve family}
The definition for the conformally invariant
$4$-modulus of a family $\Gamma$ of curves in $\Aa$ has been given in the introduction, see \eqref{def 4-Mod}. For curves $\gamma : (a, b) \to \Aa$
we shall employ the notion of local rectifiability.\\
It is worth to say that a family which consists only of curves that are not locally rectifiable has modulus zero, \cite{HKST-book}. All quasiconformal
mappings of the affine-additive group are absolutely continuous on almost every curve, \cite{HKST-paper}, \cite{Shan}. To sum up, given a quasiconformal map $f : \Omega \to \Omega'$ between domains
in the affine-additive group and given a family $\Gamma$ of closed rectifiable curves in $\Omega$, we have
$$
\Mod_4\left(\gamma\in\Gamma:f\circ\gamma\text{ not absolutely continuous}\right)=0.
$$

\bibliographystyle{plain}  
\bibliography{My_Library.bib}    

\Addresses

\end{document}